\definecolor{babyblueeyes}{rgb}{0.63, 0.79, 0.95}
\newtheorem{thm}{Theorem}[section]
\newtheorem{cor}[thm]{Corollary}
\newtheorem{lem}[thm]{Lemma}
\newtheorem{prop}[thm]{Proposition}
\newtheorem{theorem}{Theorem}
\theoremstyle{definition}
\newtheorem{definition}[thm]{Definition}
\newtheorem{example}[thm]{Example}
\newtheorem{cexample}[thm]{Counterexample}
\newtheorem{rem}[thm]{Remark}
\newtheorem*{ack}{Acknowledgements}
\numberwithin{equation}{section}
\DeclareMathOperator{\argmin}{argmin}
\DeclareMathOperator{\Hess}{Hess}
\DeclareMathOperator{\arcosh}{arcosh}
\DeclareMathOperator{\arsinh}{arsinh}
\newcommand{\RR}{\mathbb{R}}
\newcommand{\CC}{\mathbb{C}}
\newcommand{\sphere}{\mathrm{\mathbb{S}}}
\newcommand{\Cut}{\mathrm{Cut}}
\newcommand{\inPrb}{{\stackrel{\Prb}{\to}}}
\newcommand{\ZZ}{\mathbb{Z}}
\newcommand{\dd}{\mathsf{d}}
\newcommand{\EE}{\mathbb{E}}
\newcommand{\HH}{\mathbb{H}}
\newcommand{\Prb}{\mathbb{P}}
\begin{document}



\title[CLT for Fr\'echet means of Riemannian manifolds]{Stability of the cut locus and a Central Limit Theorem for Fr\'echet means of Riemannian manifolds}

\author[B.~Eltzner]{Benjamin Eltzner$^{\sharp,\dagger}$}
\address[B.~Eltzner]{Institut f\"ur Mathematische Stochastik, Georg-August-Universit\"at G\"ottingen Germany}
\email{beltzne@uni-goettingen.de}

\author[F.~Galaz-Garc\'ia]{Fernando Galaz-Garc\'ia$^{*,\dagger}$}
\address[F.~Galaz-Garc\'ia]{Institut f\"ur Algebra und Geometrie, Karlsruher Institut f\"ur Technologie (KIT), Germany}
\email{galazgarcia@kit.edu}

\author[S.~Huckemann]{Stephan Huckemann$^{\sharp,\dagger}$}
\address[S.~Huckemann]{Institut f\"ur Mathematische Stochastik, Georg-August-Universit\"at G\"ottingen Germany}
\email{huckeman@math.uni-goettingen.de}

\author[W.~Tuschmann]{Wilderich Tuschmann$^{*,\dagger}$}
\address[W.~Tuschmann]{Institut f\"ur Algebra und Geometrie, Karlsruher Institut f\"ur Technologie (KIT), Germany}
\email{wilderich.tuschmann@kit.edu}


\thanks{$\sharp$ Supported by the RTG 2088 ``Discovering Structure in Complex Data'' at the University of G\"ottingen and by the DFG HU 1575/7 ``Smeary Limit Theorems''.}
\thanks{$^*$Supported by the DFG Priority Program SPP 2026 ``Geometry at Infinity'' and by the RTG 2229 ``Asymptotic Invariants and Limits of Groups and Spaces'' at KIT/Universit\"at Heidelberg.}
\thanks{$^\dagger$ Supported by HeKKSaGOn German--Japanese University Network, Research Project VIII: ``Mathematics at the Interface of Science and Technology towards Innovation''. }

\date{\today}


\subjclass[2010]{53C20, 60F05, 62E20}
\keywords{Fr\'echet mean, central limit theorem, Riemannian manifold, cut locus, homogeneous space}


\begin{abstract} We obtain a Central Limit Theorem for closed Riemannian manifolds, clarifying along the way the geometric meaning of some of the hypotheses in Bhattacharya and Lin's Omnibus Central Limit Theorem for Fr\'echet means. We obtain our CLT assuming certain stability hypothesis for the cut locus, which always holds when the manifold is compact but may not be satisfied in the non-compact case. 
\end{abstract}
\maketitle





\section{Introduction}

Statistics on Riemannian manifolds has attracted much interest, it being the natural setting for considering data on smooth curved spaces. In this context, Bhattacharya and Patrangenaru proved in their seminal work \cite{BP2003,BP2005} a Central Limit Theorem (CLT) for intrinsic Fr\'echet means on Riemannian manifolds (see \cite[Theorem 2.1]{BP2005}). More recently, Bhattacharya and Lin obtained an Omnibus CLT for intrinsic Fr\'echet means on more general metric spaces and discussed some of its Riemannian consequences (see \cite[Theorem 2.2]{BL} or Section~\ref{S:RCLT} below). In this note we further examine the Omnibus CLT in the Riemannian setting and shed light on the roles played by the cut locus and compactness.

Before stating our main result, let us discuss its context (see Section~\ref{S:RCLT} for a more detailed discussion). Let $M$ be a complete Riemannian $m$-manifold and let $\mu$ be a probability measure on $M$. We denote the distance between two points $p,q\in M$ by $\dd(p,q)$ and let $U_p$ be an open normal neighborhood around $p\in M$, so that there exists a system of normal coordinates $\varphi\colon U_p\subseteq M\to V\subseteq T_pM \cong\RR^m$ given by the inverse of the exponential map $\exp_p\colon T_pM \to M$.  Necessary conditions for the $\sqrt{n}$-asymptotic Gaussian CLT for intrinsic Fr\'echet means on Riemannian manifolds are obviously: 
\begin{itemize}
 \item[(A1)] The existence and uniqueness of the Fr\'echet mean of the measure $\mu$ (which is a difficult issue, e.g. \cite{Ka1977,KWS90,Le98,Gr05,Af} not covered here).
 \item[(A3)] $\mu_n \inPrb \mu$, where $\mu_n$ is the empirical distribution based on $n$ independent random variables with common distribution $\mu$ (which can be assumed due to the very general strong laws of Ziezold \cite{Zi} and Bhattacharya--Patrangenaru \cite{BP2003}).
 \item[(A4)] The existence of moments of derivatives of the function given by $v\mapsto \dd(\phi^{-1}(v),p)^2$, for $p\in M$ (1st moment of 2nd derivative and 2nd moment of 1st derivative), which go into the covariance of the asymptotic Gaussian distribution (which are the analogs of standard requirements in the Euclidean case).
  \item[(A6)] Non-singularity of the Hessian $\Hess_{v=0}\EE[\rho(\varphi^{-1}(v),p)^2]$, for $p \in M$, which is a necessary condition for a $\sqrt{n}$-limiting law.
\end{itemize}
In addition to (mild generalizations of) the preceding conditions (see Section~\ref{S:RCLT}), the Omnibus CLT in \cite{BL} additionally requires: 
\begin{itemize}
 \item[(A2)] Twice differentiability of the function $v\mapsto \dd(\varphi^{-1}(v),p)^2$ for $\mu$-a.e. $p\in M$. 
 \item[(A5)] Locally uniform $L^1$ smoothness of the Hessian of $\dd(\varphi^{-1}(v),p)^2$, for $p\in M$.
\end{itemize}
We believe that (A5) is deeply related to (A2). In particular, it would seem from \cite[Corollary 2.3]{BL} that (A2) is implied by the following condition involving the cut locus of the Fr\'echet mean:
\begin{itemize}
 \item[(C)] There exists a neighborhood of the cut locus of the unique Fr\'echet mean carrying no probability.
\end{itemize}
In this contribution, we show that this is true on compact Riemannian manifolds but false in general, leading to the following refinement of \cite[Corollary 2.3]{BL}. Recall that a manifold is \emph{closed} if it is compact and has no boundary.

\begin{theorem}[\protect{CLT for intrinsic Fr\'echet means on closed Riemannian manifolds}]
\label{T:MAIN_THM}
 Let $(M,g)$ be a complete Riemannian $m$-manifold. Let $\mu$ be a probability measure on $M$ with unique Fr\'echet mean $q_o$. Suppose that the cut locus is topologically stable and that there exists a neighborhood $W$ of $\Cut(q_o)\subseteq M$ such that $\mu(W)=0$. Let $U_{q_o}\subseteq M$ be an open normal neighborhood of the mean $q_o$, so that
	\[
	\varphi=\exp_{q_o}^{-1}\colon U_{q_o}\subseteq M \longrightarrow \varphi(U_{q_o})\subseteq T_{q_o}M\cong \mathbb{R}^m
	\]
is a diffeomorphism. If assumptions (A4)--(A6) hold, then
\begin{align*}
\label{EQ:CLT_THM_RIEM}
	\sqrt{n}(\varphi(q^n_o)-\varphi(q_o)) \stackrel{\mathcal{L}}{\longrightarrow} N(0,\Lambda^{-1}C\Lambda^{-1}) \text{ as $n\to \infty$}.
\end{align*}
\end{theorem}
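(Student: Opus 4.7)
My plan is to reduce the theorem to the Omnibus CLT (OCLT) of Bhattacharya--Lin \cite{BL}, applied to the pushed-forward measure $\varphi_*\mu$ on $\varphi(U_{q_o}) \subseteq \RR^m$. Of the six hypotheses (A1)--(A6) underlying the OCLT, hypotheses (A4)--(A6) are postulated directly in the statement, (A1) is granted by the assumption of a unique Fr\'echet mean $q_o$, and (A3), i.e.\ $q_o^n \inPrb q_o$, follows from the consistency results of Ziezold \cite{Zi} and Bhattacharya--Patrangenaru \cite{BP2003}. The whole substance of the proof therefore lies in checking hypothesis (A2): for $\mu$-almost every $p \in M$, the map $v \mapsto \dd(\varphi^{-1}(v),p)^2$ must be twice differentiable on a common neighborhood of $0 \in T_{q_o}M$.

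\textbf{Verifying (A2) via the cut-locus hypotheses.} For each $p \in M \setminus \Cut(q_o)$, the squared-distance function $q \mapsto \dd(q,p)^2$ is smooth on $M \setminus \Cut(p)$, which is an open set containing $q_o$. The hypothesis $\mu(W) = 0$ ensures that $p \notin \Cut(q_o)$ for $\mu$-almost every $p$, so this pointwise smoothness at $q_o$ is already available almost everywhere. The issue is that the size of the neighborhood of $q_o$ on which smoothness holds might a priori shrink as $p$ approaches $\Cut(q_o)$. I would resolve this by invoking topological stability of the cut locus, which should supply a single neighborhood $U$ of $q_o$ such that $\Cut(q) \subseteq W$ for every $q \in U$. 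Given this, for every $p \notin W$ and every $q \in U$ one has $p \notin \Cut(q)$, so $\dd(\cdot,p)^2$ is smooth throughout $U$; pulling back by $\varphi^{-1}$ yields the desired uniform twice differentiability of $v \mapsto \dd(\varphi^{-1}(v),p)^2$ on $\varphi(U)$ for $\mu$-a.e.\ $p$, which is precisely (A2).

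\textbf{Conclusion and main obstacle.} With (A1)--(A6) in hand, the OCLT directly delivers the asserted convergence to $N(0,\Lambda^{-1}C\Lambda^{-1})$, the covariance being the one prescribed by the OCLT in terms of the first and second $\mu$-derivatives of the squared-distance function at $q_o$. I expect the technical crux to be the uniformisation step inside the verification of (A2): extracting a $p$-independent neighborhood $U$ from the phrase ``topologically stable cut locus'' requires a precise semicontinuity statement for the set-valued map $q \mapsto \Cut(q)$, to the effect that every open neighborhood of $\Cut(q_o)$ contains $\Cut(q)$ for all $q$ in some neighborhood of $q_o$. This is the point at which closedness of $M$ (highlighted in the title) really enters: on a closed manifold the injectivity radius and cut distance behave tamely enough to enforce such semicontinuity, whereas in the non-compact case $\Cut(q)$ can escape to infinity under arbitrarily small perturbations of $q$, destroying exactly the uniformity that (A2) requires.
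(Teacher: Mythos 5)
Your proposal is correct and follows essentially the same route as the paper: reduce to the Omnibus CLT, note that only (A2b) needs work, use topological stability to find $r>0$ with $\Cut(B(q_o,r))\subseteq W$, and then use $\mu(W)=0$ together with the symmetry $p\in\Cut(q)\Leftrightarrow q\in\Cut(p)$ to get a single neighborhood of $q_o$ on which $\dd(\cdot,p)^2$ is $C^2$ for $\mu$-a.e.\ $p$. The paper merely packages the intermediate step as a named condition (C'): $\mu(\Cut(B(q_o,r)))=0$; and the ``semicontinuity statement'' you flag as the remaining crux is exactly the paper's definition of topological stability, so no further argument is needed there.
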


We obtain Theorem~\ref{T:MAIN_THM} as a consequence of the Omnibus CLT, under the more general assumption that, for any point $p\in M$, the cut locus $\Cut(p)$ of $p$ is \emph{(topologically) stable}, i.e.\ that for any open neighborhood $U(\Cut(p))$ of $\Cut(p)$ in $M$, there exists $r>0$ such that $\Cut(B(p,r))\subseteq U(\Cut(p))$. This property is implied by the compactness of $M$ and is in general not true for complete, non-compact Riemannian manifolds (see Section~\ref{S:CUT_LOCUS}). Example~\ref{EX:CYLINDER_CONT} shows that this stability property is in fact necessary in order to apply the Omnibus CLT. As pointed out in \cite{BL}, Theorem~\ref{T:MAIN_THM} improves upon Bhattacharya and Patrangenaru's CLT for the intrinsic Fr\'echet mean in \cite{BP2005} and \cite[Theorems 2.3 and 5.3]{BB}.

Our article is organized as follows. In Section~\ref{S:FRECHET_FUNCTION_MEAN} we recall basic material on the Fr\'echet function and the Fr\'echet mean. In Section~\ref{S:CUT_LOCUS} we recall basic material on the cut locus, discuss different notions of stability, and show that the cut locus of a closed Riemannian manifold is topologically stable. We use this fact in Section~\ref{S:RCLT}, where we discuss the Omnibus CLT and prove Theorem~\ref{T:MAIN_THM}.


\begin{ack}The authors wish to thank Luis Guijarro for helpful conversations.   
\end{ack}

\section{Fr\'echet means}
\label{S:FRECHET_FUNCTION_MEAN}

In this section we recall some basic material on Fr\'echet means.  

\subsection{Basic definitions} Although we will work in the Riemannian setting, the basic objects we consider can be defined in the more general context of \emph{metric measure spaces} and we will define them in this generality.
For an in depth review of means in the Riemannian case, including a historical discussion, we refer the reader to \cite{Af}.


\begin{definition}
Let $(Q,\dd)$ be a complete, separable metric space and let $\mu$ be a non-negative, locally finite measure on the Borel $\sigma$-algebra (or $\sigma$-field) of $Q$. The triple $(Q,\dd,\mu)$ is a \emph{metric measure space}. %
\end{definition}

Usual examples of metric measure spaces include complete Riemannian $m$-manifolds with the canonical $m$-dimensional volume measure, $m$-dimensional Alexandrov spaces (with curvature bounded below) with the $m$-dimensional Hausdorff measure, $\mathrm{RCD}^*(K,N)$ spaces. Alexandrov spaces generalize complete Riemannian manifolds with a uniform lower sectional curvature bound, while $\mathrm{RCD}^*(K,N)$ spaces generalize complete Riemannian manifolds with Ricci curvature bounded below by $K$ and dimension bounded above by $N$. 


\begin{definition} Let $(Q,\dd,\mu)$ be a metric measure space.  The \emph{Fr\'echet function} of the measure $\mu$ is the real-valued function
\begin{align}
	F\colon Q & \to \mathbb{R}\nonumber\\
	q  	& \mapsto \int_{Q} \dd^2(q,p)d\mu(p)
\end{align}
\end{definition}

The value $F(q)\in\mathbb{R}$, if existent, is  the expected squared distance from the point $q\in Q$. 

Suppose that $F$ is finite for some $q\in Q$ and that it has a minimum value
\[
m = \min_{q\in Q}F(q).
\]
Let $F^{-1}(m)\subset Q$ be the preimage of the minimum value $m$. As customary, we call the elements of $F^{-1}(m)$ the \emph{minimum points} (or \emph{minimizers}) of the Fr\'echet function $F$.
 This set is sometimes denoted in the literature by
\[
\argmin_{q\in Q}F(q).
\]
Note that the Fr\'echet function always has a minimum if $Q$ is compact.


\begin{definition}
Let $(Q,\dd,\mu)$ be a metric measure space, let $F$ be its Fr\'echet function and assume that $F$ has a minimum value $m$. The set $F^{-1}(m)$ of minimum points of $F$ is  the \emph{Fr\'echet mean set of $(Q,\dd,\mu)$}. 
If the Fr\'echet mean set consists of a single point $q_o\in Q$, we say that \emph{the Fr\'echet mean of  $(Q,\dd,\mu)$ exists and is equal to $q_o$}. 
\end{definition}

Note that in the literature the Fr\'echet mean is also known as the \emph{barycenter} or \emph{center of mass} (cf. \cite{Af,BBI,St}). 
In the special case where the metric measure space is a (complete) Riemannian manifold with the volume measure, the Fr\'echet mean is known as the \emph{(Riemannian) center of mass} or \emph{Karcher mean} (cf. \cite{GK,Ka1977,Ka2014}).

The Fr\'echet mean set exists under fairly general conditions on $(Q,\dd,\mu)$. Indeed, in the Riemannian case, completeness of the metric and finiteness of the Fr\'echet function suffice. This was proven in \cite{BP2003}. The argument, however, is metric and carries over to metric measure spaces that satisfy the Heine--Borel property (i.e.\ all closed bounded subsets are compact).


\begin{prop}[cf.\ \protect{\cite[Theorem 2.1 (a)]{BP2003}}] Let $(Q,\dd,\mu)$ be a metric measure space that satisfies the Heine--Borel property and let $F$ be its Fr\'echet function. If there is a point $q\in Q$ such that that $F(p)$ is finite, then the Fr\'echet mean set is a nonempty compact subset of $Q$.
\end{prop}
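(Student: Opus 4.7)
The plan is threefold: establish (i) continuity of $F$ on $Q$ whenever it is finite somewhere, (ii) coercivity of $F$, so that sublevel sets are bounded, and then (iii) apply the Heine--Borel property to extract a minimizer and to conclude compactness of the Fr\'echet mean set.

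For step (i), I would use the identity
\[
|\dd(q_1,p)^2 - \dd(q_2,p)^2| \leq \dd(q_1,q_2)\bigl(\dd(q_1,p) + \dd(q_2,p)\bigr),
\]
which follows from the triangle inequality together with the factorisation $a^2 - b^2 = (a-b)(a+b)$. Combined with the bound $\dd(q_1,p)^2 \leq 2\dd(q_1,q_2)^2 + 2\dd(q_2,p)^2$, integrating against $\mu$ shows that if $F$ is finite at one point, it is finite everywhere, and that $F$ is in fact locally Lipschitz on $Q$, hence continuous.

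For step (ii), implicitly assuming that $\mu$ is non-trivial (otherwise $F \equiv 0$ and the mean set is all of $Q$, which need not be compact), fix a base point $q_0$ with $F(q_0) < \infty$ and choose $R > 0$ large enough that $a := \mu(B(q_0,R)) > 0$. For any $q \in Q$ with $\dd(q,q_0) \geq 2R$ and any $p \in B(q_0,R)$, the reverse triangle inequality yields $\dd(q,p) \geq \dd(q,q_0) - R \geq \dd(q,q_0)/2$. Hence
\[
F(q) \geq \int_{B(q_0,R)} \dd(q,p)^2\, d\mu(p) \geq \frac{a}{4}\,\dd(q,q_0)^2,
\]
which diverges as $\dd(q,q_0) \to \infty$. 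In particular, every sublevel set $\{F \leq c\}$ is bounded.

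For step (iii), take a minimizing sequence $(q_n)$ with $F(q_n) \to m := \inf_Q F \geq 0$. By coercivity, $(q_n)$ eventually lies in a bounded closed ball, which is compact by the Heine--Borel property, so a subsequence converges to some $q^* \in Q$. Continuity of $F$ from step (i) gives $F(q^*) = m$, so $F^{-1}(m)$ is non-empty. It is closed by continuity of $F$ and bounded as a subset of the sublevel set $\{F \leq m\}$, hence compact by Heine--Borel. The only real subtlety here is the implicit non-triviality of $\mu$ and, in step (ii), choosing the factor $2R$ so that the reverse triangle inequality gives a genuinely quadratic lower bound; once these are in hand, the rest is standard compactness.
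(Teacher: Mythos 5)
Your proof is correct and follows essentially the same route as the argument the paper defers to (it gives no proof of its own, citing Bhattacharya--Patrangenaru, Theorem~2.1(a), and noting the argument is metric): local Lipschitz continuity of $F$, coercivity obtained from a ball of positive mass, and the Heine--Borel property to extract a minimizer and conclude compactness of the closed bounded set $F^{-1}(m)$. The only point worth making explicit is that your integrations (e.g.\ of the constant $2\dd(q_1,q_2)^2$ and the Cauchy--Schwarz bound behind local Lipschitzness) tacitly use $\mu(Q)<\infty$, which is not part of the paper's definition of a metric measure space but does follow from the hypotheses: Chebyshev bounds the mass outside $\overline{B}(q_0,1)$ by $F(q_0)$, and local finiteness together with compactness of $\overline{B}(q_0,1)$ (Heine--Borel) handles the rest.
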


Granted the existence of the Fr\'echet mean set, it is of interest to find conditions that ensure that it  consists of a single point. 
This is the case, for example, when the metric measure space is a $\mathrm{CAT}(0)$ space (i.e.\ it is simply connected and has non-positive curvature in the triangle-comparison sense). This was proven for Riemannian manifolds in \cite[Theorem 2.1 (b)]{BP2003} and, for $\mathrm{CAT}(0)$ metric measure spaces, by Sturm \cite{St}. In the case of lower curvature bounds, the situation is more complicated (see, for example, \cite{Oh} for Alexandrov spaces with non-negative curvature or \cite{Af} for Riemannian centers of mass).


\begin{rem}
One may also consider   \emph{extrinsic} Fr\'echet means, defined in terms of an isometric embedding of a 
metric space into an ambient Euclidean space (see \cite[Section 3]{BP2003}). We will not consider these here. The Fr\'echet mean we have defined is also known as the \emph{intrinsic} Fr\'echet mean (cf. \cite{BP2003}). Since this object is an invariant of the metric measure space, we will refer to it, when it exists, simply as the \emph{Fr\'echet mean} (of the metric measure space). 
\end{rem}


\subsection{Fr\'echet sample means} 
Let $(Q,\dd,\mu)$ be a complete metric measure space. Assume that $F$ is finite on $Q$ and has a unique minimizer $q_o$ (which is, by definition, the Fr\'echet mean of $\mu$). Let $n$ be a positive integer and let  $\{Y_j\}_{j=1}^n$ be independent random variables  taking values on $Q$ with common distribution $\mu$. The \emph{empirical distribution $\mu_n$ based on the random variables $Y_j$} is given by
\[
\mu_n = \frac{1}{n}\sum_{j=1}^n\mathbf{1}_{Y_j},
\]
where $\mathbf{1}_{Y_j}$ is the characteristic function of $Y_j$.
The \emph{Fr\'echet sample mean (set)} is the Fr\'echet mean (set) of $\mu_n$, that is, the set of minimizers of the   Fr\'echet function of $\mu_n$, given by
\[
F_n(q)=\frac{1}{n}\sum_{j=1}^n\dd(q,Y_j)^2.
\]
Under broad conditions, for example, for metric spaces which satisfy the Heine--Borel property, every measurable choice $q_o^n$ from the Fr\'echet sample mean set  of $\mu^n$ is a \emph{consistent estimator} of $q_o$,  that is, $q_o^n\to q_o$ almost surely, as $n\to\infty$ (see \cite[Theorem 2.3]{BP2003}).
 More generally, this also holds for separable pseudo-metric spaces, by the work of Ziezold \cite{Zi} (noting that in \cite{Zi} a \emph{pseudo-metric} is called a \emph{finite quasi-metric}). The requirement that the space satisfies the Heine-Borel property has been relaxed in \cite[Theorem A.4]{Hu_Annals_Stat}, where one only requires that the closure of the union from some finite $n$ to $\infty$ of the sample mean sets satisfies the Heine-Borel property. This allows, for example, for infinite dimensional Riemannian manifolds and Alexandrov spaces.

\begin{rem}
The (local) regularity of the Fr\'echet function $F$ on a Riemannian manifold with a probability measure $\mu$ is an important assumption in the proofs of central limit theorems for Fr\'echet means. This is already the case  in Bhattacharya and Patrangenaru's Central Limit Theorem for Fr\'echet means on Riemannian manifolds \cite[Theorem~2.1]{BP2005} and is also assumed as a hypothesis in the proof of the Omnibus Central Limit Theorem (see \cite[Theorem 2.2]{BL} and compare with Theorem~\ref{T:OCLT}). Obtaining necessary and sufficient conditions that ensure the regularity of the Fr\'echet function is a non-trivial problem, as it is important to understand how the cut loci of nearby points are related (see Section~\ref{S:CUT_LOCUS}).
\end{rem}


\section{The Riemannian cut locus}
\label{S:CUT_LOCUS}

In this section we review some basic facts about the cut locus, following \cite[Chapter III, Section 4]{Sa}. We then consider stability conditions for the cut locus that will play a role in the next section.

\subsection{Background}
Let $M$ be a complete Riemannian manifold, i.e.\ a $C^\infty$-manifold with a complete $C^\infty$ Riemannian metric.  We let $U_pM\subset T_pM$ be the unit tangent space of $M$ at $p$. We will denote the unit tangent bundle of $M$ by $UM$. Given $p\in M$, the exponential map $\exp_p$ is defined on all of $T_pM$ and, since $M$ is complete, there exists a minimal (i.e. distance-realizing geodesic) segment between any two points of $M$. We say that a geodesic $\gamma\colon[a,b]\to M$ is \emph{normal} if it is parametrized with respect to arc-length, i.e. if $||\gamma'(t)||=1$ for all $t\in[a,b]$. Given a unit tangent vector $u\in U_pM$, we will denote by $\gamma_u\colon [0,\infty])\to M$ the geodesic starting out from $p$ with initial direction $u$. We assume throughout that our Riemannian manifolds are connected. 

Define
\begin{align}
t(u) = \sup \{\, t>0 \mid d(p,\gamma_u(t)) = t \,\},
\end{align}
that is, $t(u)$ is the supremum of the values $t$ such that $\gamma_u|_{[0,t]}$ is a minimal geodesic. Clearly, $0<t(u)\leq \infty$ and, if $t(u)$ is finite, then it is the last value of $t$ such that $\gamma_u|_{[0,t]}$ is minimal. 
Recall that if $\gamma$ is a geodesic segment joining two points $p,q\in M$, $q$ is \emph{conjugate to $p$ along $\gamma$} if there is a non-zero Jacobi field along $\gamma$ vanishing at $p$ and $q$. 
It is a basic fact in Riemannian geometry that geodesics emanating from a given point $p\in M$ do not minimize past their first conjugate point.


\begin{prop}[\protect{cf.\ \cite[III, Proposition 4.1]{Sa}}] Let $M$ be a complete Riemannian manifold, fix $p\in M$ and let $u\in U_pM$ be a unit tangent vector.
\begin{itemize}
	\item[(1)] Suppose that $t(u)<\infty$. Then $T=t(u)$ if and only if $\gamma_u|_{[0,T]}$ is a normal minimal geodesic and at least one of the following conditions holds:
	\smallskip
	\begin{itemize}
		\item[(a)] $\gamma_u(T)$ is the first conjugate point of $p$ along $\gamma_u$;\smallskip
		\item[(b)] there exists a unit tangent vector $v\in U_pM$, $v\neq u$, such that $\gamma_u(T)=\gamma_v(T)$.
	\end{itemize}
	\item[(2)] The function $t\colon UM\to [0,\infty]$ given by $u\mapsto t(u)$ is continuous. 
\end{itemize}
 \end{prop}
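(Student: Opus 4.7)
The plan splits (1) into two directions and (2) into upper and lower semicontinuity of $u\mapsto t(u)$. For the ``if'' direction of (1), assume $\gamma_u|_{[0,T]}$ is a normal minimal geodesic and at least one of (a), (b) holds; it suffices to show that no extension $\gamma_u|_{[0,T']}$ with $T'>T$ remains minimal. Under (a), this is the classical second-variation fact that geodesics cease to minimize past their first conjugate point. Under (b), concatenating $\gamma_v|_{[0,T]}$ with $\gamma_u|_{[T,T']}$ produces a broken curve of length $T'$ from $p$ to $\gamma_u(T')$ with a corner at $\gamma_u(T)$ (since $v\neq u$), which can be strictly shortened by a first-variation corner-smoothing argument.

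For the ``only if'' direction, assume $T=t(u)<\infty$. Continuity of $\dd$ gives $\dd(p,\gamma_u(T))=T$, so $\gamma_u|_{[0,T]}$ is normal and minimal. Supposing neither (a) nor (b) holds, pick $T_n\downarrow T$ with $T_n>T$; since $\gamma_u|_{[0,T_n]}$ is not minimal, there exist $v_n\in U_pM$ and $\ell_n=\dd(p,\gamma_u(T_n))<T_n$ with $\exp_p(\ell_n v_n)=\gamma_u(T_n)$. Compactness of $U_pM$ and continuity of $\exp_p$ yield a subsequence with $v_n\to v$, $\ell_n\to\ell\leq T$, and $\gamma_v(\ell)=\gamma_u(T)$; minimality of $\gamma_u|_{[0,T]}$ forces $\ell=T$. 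If $v\neq u$ we obtain (b), a contradiction. If $v=u$, failure of (a) makes $\exp_p$ a local diffeomorphism at $Tu$, so for large $n$ the equality $\exp_p(T_n u)=\exp_p(\ell_n v_n)$ forces $T_n u=\ell_n v_n$ and thus $T_n=\ell_n$, contradicting $\ell_n<T_n$.

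For (2), upper semicontinuity is immediate: if $u_n\to u$ and $t(u)<s<\limsup t(u_n)$, a subsequence gives $t(u_n)>s$, so $\gamma_{u_n}|_{[0,s]}$ is minimal and, in the limit, so is $\gamma_u|_{[0,s]}$, contradicting $s>t(u)$. For lower semicontinuity, assume $u_n\to u$ and $t(u_n)\to T'<\infty$; applying (1) to each $u_n$ and passing to a subsequence yields one of two cases. Either $\gamma_{u_n}(t(u_n))$ is first conjugate to $p$ along $\gamma_{u_n}$, so $d\exp_p$ is singular at $t(u_n)u_n$ and hence at $T'u$, giving conjugacy of $\gamma_u(T')$ along $\gamma_u$; or there exist $v_n\neq u_n$ with $\gamma_{v_n}(t(u_n))=\gamma_{u_n}(t(u_n))$ and $v_n\to v$, whence $\gamma_v(T')=\gamma_u(T')$. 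If $v\neq u$ this is case (b) at $T'$, while if $v=u$ a mean-value argument applied to $\exp_p(t(u_n)u_n)=\exp_p(t(u_n)v_n)$ with distinct $u_n,v_n\to u$ again forces singularity of $d\exp_p$ at $T'u$. In each subcase part (1) certifies $T'\geq t(u)$, since $\gamma_u|_{[0,t(u))}$ has no interior conjugate points and no splitting minimizers.

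The main obstacle, present in both the ``only if'' direction of (1) and the lower semicontinuity step of (2), is the subcase $v=u$: coincidence of exponential images of distinct but asymptotically parallel tangent vectors must be upgraded to an honest singularity of $d\exp_p$ at the limit direction, which is precisely the geometric dichotomy separating case (a) from case (b).
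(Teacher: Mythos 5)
The paper states this proposition purely as background and gives no proof of its own, deferring to \cite[III, Proposition 4.1]{Sa}; your argument is correct and is essentially the classical proof found there (and in do Carmo, Ch.~13): second variation plus corner-smoothing for the ``if'' direction of (1), the compactness/limiting-minimizer dichotomy with the local-diffeomorphism contradiction for the ``only if'' direction, and upper/lower semicontinuity via limits of minimality and of the alternatives (a)/(b) for (2). The only point to polish is that in part (2) the base points vary, so the coincidences $\exp_{p_n}(t(u_n)u_n)=\exp_{p_n}(t(u_n)v_n)$ and the singularity statements should be run through the map $(q,w)\mapsto(q,\exp_q w)$ on $TM$ (a local diffeomorphism at $(p,T'u)$ exactly when $d(\exp_p)_{T'u}$ is nonsingular) rather than through a single fixed $\exp_p$; with that routine adjustment your proof is complete and matches the standard one the paper is citing.
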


Note that if $M$ is compact, then $t(u)$ is finite for any $u\in UM$, and vice versa. 

\begin{definition} Let $M$ be a complete Riemannian manifold, let $p\in M$ and fix $u\in U_pM$. If $t(u)<\infty$, we call $t(u)u\in T_pM$ the \emph{tangent cut point} of $p$ along $\gamma_u$. Similarly, we call $\exp_pt(u)u$ the \emph{cut point} of $p$ along $\gamma_u$. The sets
\begin{align*}
	\widetilde{\Cut}(p) 	& = \{\, t(u)u \mid u\in U_pM,\ t(u)<\infty\,\},\\
	\Cut(p) 			& = \exp_p(\widetilde{\Cut}(p))
\end{align*}
are called, respectively, the \emph{tangent cut locus}, and the \emph{cut locus} of $p$. Let 
\[
\widetilde{\mathrm{Int}}(p) = \{\, tu \mid 0<t<t(u),\ u\in U_pM \,\}.
\]
We call 
\[
\mathrm{Int}(p) = \exp_p(\widetilde{\mathrm{Int}}(p))
\]
the \emph{interior set} at $p$.
\end{definition}

The tangent cut locus, the cut locus and the interior set of $p$ are related in the following way.
\begin{prop}[\protect{cf.\ \cite[III, Lemma 4.4]{Sa}}] Let $M$ be a complete Riemannian manifold and fix $p\in M$. Then the following assertions hold:
\begin{itemize}
	\item[(1)] $\mathrm{Int}(p)\cap \Cut(p) = \emptyset$, $M=\mathrm{Int}(p)\cup \Cut(p)$, and $\overline{\mathrm{Int}(p)}=M$.\smallskip
	\item[(2)] $\widetilde{\mathrm{Int}}(p)$ is a maximal domain containing the origin $o_p\in T_pM$, on wich $\exp_p$ is a diffeomorphism. \smallskip
	\item[(3)] $\Cut(p)$ has volume zero in $M$, and $\dim(\Cut(p))\leq m-1$.
\end{itemize}
\end{prop}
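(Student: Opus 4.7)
The plan is to combine the Hopf--Rinow theorem (available since $M$ is complete) with the continuity of $t\colon UM\to[0,\infty]$ recorded in the previous proposition, and the classical fact that length-minimizing paths in a Riemannian manifold are smooth geodesics (so broken geodesics never minimize).

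For (1), I would first establish $\mathrm{Int}(p)\cap\Cut(p)=\emptyset$ by a corner-smoothing argument. If some $q$ lay in both sets, then $q=\exp_p(tu)$ with $0<t<t(u)$ and also $q=\exp_p(t(v)v)$ for some $v\in U_pM$; comparing lengths of the two minimal segments forces $t=t(v)$. The subcase $u=v$ would give $t=t(u)$, contradicting $t<t(u)$. In the subcase $u\neq v$, extending $\gamma_u$ slightly past parameter $t$ still yields a minimizing segment, so the broken path $\gamma_v|_{[0,t(v)]}\ast\gamma_u|_{[t,t+\eps]}$ is length-minimizing yet has a genuine corner at $q$ (the two incoming/outgoing tangents differ, since otherwise $\gamma_u=\gamma_v$), contradicting smoothness of length-minimizing curves. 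For $M=\mathrm{Int}(p)\cup\Cut(p)$, Hopf--Rinow presents every $q\in M$ as $\gamma_u(\dd(p,q))$ for some minimal $\gamma_u$, and $\dd(p,q)\leq t(u)$ places $q$ in one of the two sets. Density $\overline{\mathrm{Int}(p)}=M$ then follows because every cut point is a radial limit of interior points along its incoming minimizing geodesic.

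For (2), continuity of $t$ implies that $\widetilde{\mathrm{Int}}(p)=\{tu:u\in U_pM,\ 0<t<t(u)\}$ is open and star-shaped about $o_p$. On (the enlargement by) this domain $\exp_p$ has injective differential, since no conjugate point can occur in the interior of a strictly minimizing segment, and $\exp_p$ is injective there by the same corner-smoothing argument used in (1). Hence $\exp_p$ restricts to a bijective local diffeomorphism onto $\mathrm{Int}(p)$, i.e.\ a diffeomorphism. Maximality follows from the dichotomy in the previous proposition: at any tangent cut vector $t(u)u$ either $(d\exp_p)_{t(u)u}$ is singular (conjugate case) or two distinct geodesics meet at $\exp_p(t(u)u)$ (non-conjugate case), so no neighborhood of $t(u)u$ can be adjoined to $\widetilde{\mathrm{Int}}(p)$ while preserving the diffeomorphism property.

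For (3), I would view $\widetilde{\Cut}(p)$ as the image of the continuous map $\Phi\colon\{u\in U_pM:t(u)<\infty\}\to T_pM$ given by $\Phi(u)=t(u)u$. Since $U_pM$ is an $(m-1)$-sphere, this image and its further image under the smooth map $\exp_p$ have topological dimension at most $m-1$, yielding the dimension bound. For the volume-zero statement, I would work in polar coordinates $(r,u)$ on $T_pM\setminus\{o_p\}$, where $\widetilde{\Cut}(p)$ is precisely the graph of $t$ over $\{u:t(u)<\infty\}$; such a graph has $m$-dimensional Lebesgue measure zero by Fubini, and the smoothness (hence local Lipschitz property) of $\exp_p$ transports this null set into a null set in $M$ by a countable cover by relatively compact pieces. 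The main obstacle is the corner-smoothing argument invoked in (1) and (2); once this technical point is granted, the rest is essentially bookkeeping with the continuous function $t$ and the Hopf--Rinow theorem.
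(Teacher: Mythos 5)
First, note that the paper does not actually prove this proposition: it is quoted as background from Sakai \cite[III, Lemma 4.4]{Sa}, so there is no in-paper argument to compare against. On its own merits, your treatment of (1) and (2) is the standard textbook proof and is correct: the broken-geodesic (``corner-smoothing'') argument correctly rules out $\mathrm{Int}(p)\cap\Cut(p)\neq\emptyset$ and gives injectivity of $\exp_p$ on $\widetilde{\mathrm{Int}}(p)$; Hopf--Rinow gives the covering $M=\mathrm{Int}(p)\cup\Cut(p)$; continuity of $t$ gives openness; absence of conjugate points strictly before the cut point gives nonsingularity of $d\exp_p$; and the dichotomy from the preceding proposition gives maximality (in the non-conjugate case you should add the one-line argument that $\exp_p$, being open near the non-conjugate cut vector, must identify some interior vector near $t(v)v$ with a vector near $t(u)u$, so injectivity fails on any enlargement). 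The measure-zero claim in (3) is also correct as you argue it: in polar coordinates $\widetilde{\Cut}(p)$ is the graph of the continuous function $t$, hence Lebesgue-null by Fubini, and the locally Lipschitz map $\exp_p$ carries null sets to null sets.

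There is, however, a genuine gap in your dimension bound in (3). You infer that the ``further image under the smooth map $\exp_p$'' of an $(m-1)$-dimensional set again has topological dimension at most $m-1$. That inference is invalid: continuous (even smooth) maps can raise the topological dimension of a subset when they are not injective on it (Peano-type phenomena), and $\exp_p$ restricted to $\widetilde{\Cut}(p)$ is in general not injective. Your map $\Phi(u)=t(u)u$ is unproblematic because it is injective and $U_pM$ is a countable union of compacta, so $\widetilde{\Cut}(p)$ is homeomorphic to $\{u:t(u)<\infty\}$ piece by piece; but this does not pass through $\exp_p$. Nor can you route the argument through Hausdorff dimension (Lipschitz maps do not increase it), because the graph of a merely continuous function over an $(m-1)$-dimensional base can have Hausdorff dimension strictly larger than $m-1$, so you have no a priori bound $\dim_H\widetilde{\Cut}(p)\leq m-1$ to push forward. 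The correct and cheap repair is to deduce the dimension bound from what you have already proved: $\Cut(p)$ is closed (complement of the open set $\mathrm{Int}(p)$) and has empty interior (since $\mathrm{Int}(p)$ is dense), or alternatively has volume zero, and a closed subset of an $m$-manifold with empty interior has covering dimension at most $m-1$ by the Hurewicz--Wallman criterion. (If $\dim$ is meant in the Hausdorff sense, as in the Itoh--Tanaka result the paper cites later in (RA2b), the statement is substantially deeper and is not reached by any of these elementary arguments.)
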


It follows that, for any point $q\in M\setminus \Cut(p)$, there exists a unique normal minimal geodesic joining $p$ to $q$. In particular, if $M$ is compact and $m$-dimensional, then $\widetilde{\mathrm{Int}}(p)$ is an $m$-dimensional open ball whose boundary $\partial\tilde{\Cut}(p)$ is homeomorphic to $S^{m-1}$. Hence $M$ may be obtained from the cut locus $\Cut(p)$ by attaching an $m$-dimensional ball via the exponential map $\exp_p\colon\widetilde{\Cut}(p)\to\Cut(p)$ 
Moreover, $\Cut(p)$ is a strong deformation retract 
of $M\setminus \{p\}$. Observe that, if $q$ is a cut point of $p$ along $\gamma$, then $p$ is a cut point of $q$ along $t\mapsto \gamma(t(u) - t)$. 

Now we recall some facts about the distance function $\dd\colon M \to \RR$ of a complete Riemannian manifold $M$. Given $p\in M$, we define the \emph{distance function $d_p\colon M\to \mathbb{R}$ to $p$} by 
\[
d_p(q) = \dd(p,q).
\]
The function $d_p$ is a continuous function.

\begin{prop}[\protect{cf.\ \cite[III, Proposition 4.8]{Sa}}] Let $M$ be a complete Riemannian manifold and fix $p\in M$. Then the following assertions hold:
\begin{itemize}
	\item[(1)] The distance function $d_p$ is of class $C^\infty$ on $M\setminus \{\Cut(p)\cup \{p\}\}$.\smallskip
	\item[(2)] The gradient vector $\nabla d_p(q)$ of $d_p$ at $q\in M\setminus \{\Cut(p)\cup \{p\}\}$ is given by 
	\begin{align}
		(\nabla d_p)(q) = \frac{\partial}{\partial t}\gamma_{pq}(d_p(q)),
	\end{align}
	where $\gamma_{pq}$ denotes a unique minimal geodesic from $p$ to $q$ parametrized by arc length. In particular, $||(\nabla d_p)(q)||=1$.\smallskip
	\item[(3)] If there exist at least two normal minimal geodesics $\gamma_1$, $\gamma_2$ joining $p$ to $q$, then $d_p$ is not differentiable at $q$. Note that such $q$ belong to $\Cut(p)$.
\end{itemize}
\end{prop}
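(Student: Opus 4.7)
The plan is to prove (1) and (2) simultaneously by exploiting the fact that, on $M\setminus\Cut(p)$, the exponential map is a smooth diffeomorphism under which the distance function $d_p$ pulls back to the Euclidean norm on $T_pM$, and to handle (3) by a direct first-variation argument combined with the characterization of $t(u)$ recalled in the preceding proposition.

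For (1) and (2), I would first invoke the preceding proposition to note that $\exp_p$ restricts to a diffeomorphism from $\widetilde{\mathrm{Int}}(p)\subseteq T_pM$ onto $\mathrm{Int}(p) = M\setminus\Cut(p)$, sending the origin to $p$. Consequently, on $M\setminus(\Cut(p)\cup\{p\})$ the inverse $\exp_p^{-1}$ is smooth and takes values in $T_pM\setminus\{o_p\}$. The Gauss lemma, applied to the radial geodesics issuing from $p$, then implies
\[
d_p(q) = \|\exp_p^{-1}(q)\|_{g_p}
\]
for every $q\in M\setminus(\Cut(p)\cup\{p\})$, where $\|\cdot\|_{g_p}$ denotes the norm induced by the inner product $g_p$ on $T_pM$. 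Since the Euclidean norm is smooth on $T_pM\setminus\{o_p\}$, this yields (1). For (2), I would apply the first variation of arc length: given a smooth curve $c\colon(-\varepsilon,\varepsilon)\to M$ with $c(0)=q$ and $c'(0)=w$, and the unique unit-speed minimal geodesic $\gamma_{pq}$ from $p$ to $q$, one obtains
\[
\left.\frac{d}{ds}\right|_{s=0} d_p(c(s)) = \langle w,\, \gamma'_{pq}(d_p(q))\rangle_q.
\]
Since this holds for every $w\in T_qM$, the defining identity of the gradient forces $\nabla d_p(q)=\gamma'_{pq}(d_p(q))$, which has unit norm because $\gamma_{pq}$ is parametrized by arc length.

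For (3), suppose for contradiction that $d_p$ were differentiable at $q$ while two distinct unit-speed minimal geodesics $\gamma_1,\gamma_2$ of common length $L=d_p(q)$ joined $p$ to $q$. Given an arbitrary $w\in T_qM$, pick a smooth curve $c$ with $c(0)=q$, $c'(0)=w$, and apply the first variation inequality to a variation interpolating between $\gamma_i$ and a short path from $p$ to $c(s)$; this yields
\[
\left.\frac{d}{ds}\right|_{s=0} d_p(c(s)) \leq \langle w,\, \gamma'_i(L)\rangle_q,\qquad i=1,2.
\]
Differentiability then gives $\langle w,\nabla d_p(q)\rangle\leq\langle w,\gamma'_i(L)\rangle$ for every $w$; swapping $w$ for $-w$ promotes this to equality, so $\nabla d_p(q)=\gamma'_1(L)=\gamma'_2(L)$, contradicting $\gamma_1\neq\gamma_2$. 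The closing remark of (3), that such $q$ automatically belongs to $\Cut(p)$, follows directly from clause (b) of the characterization of $t(u)$ in the earlier proposition: the presence of a second minimal geodesic $\gamma_2$ to $q$ forces $q=\exp_p(t(u_1)u_1)\in\Cut(p)$, where $u_1$ is the initial direction of $\gamma_1$.

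The main obstacle I anticipate is the careful bookkeeping in the first variation argument, in particular ensuring that an \emph{inequality} (rather than equality) appears with the correct sign so that the $w\leftrightarrow -w$ swap collapses to equality, and verifying globally on $\mathrm{Int}(p)\setminus\{p\}$ the Gauss-lemma identity $d_p=\|\exp_p^{-1}(\cdot)\|_{g_p}$; the latter rests on the observation that the radial geodesic produced by $\exp_p$ from a point in $\widetilde{\mathrm{Int}}(p)$ is both the unique and the length-minimizing geodesic to its image.
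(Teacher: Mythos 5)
The paper does not prove this proposition at all: it is quoted verbatim from Sakai \cite[III, Proposition 4.8]{Sa} as background material, so there is no in-paper argument to compare yours against. That said, your proposal is the standard textbook proof and it is correct. For (1)--(2), the identity $d_p(q)=\|\exp_p^{-1}(q)\|_{g_p}$ on $\mathrm{Int}(p)$ actually follows directly from the definition of $\widetilde{\mathrm{Int}}(p)$ via $t(u)$ (radial geodesics there are minimizing by construction), so the Gauss lemma is not strictly needed for that identity --- it is needed, rather, to identify the gradient with the radial direction if one computes it from the pullback formula instead of via first variation; either route works. Your first-variation computation for (2) is fine because for $q\in\mathrm{Int}(p)\setminus\{p\}$ the minimal geodesic from $p$ to $c(s)$ depends smoothly on $s$. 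For (3), the sign bookkeeping is right: the comparison $d_p(c(s))\leq L(\alpha(s,\cdot))$ with a variation fixing the endpoint $p$ and following $c$ at the other end gives the one-sided bound $\langle w,\nabla d_p(q)\rangle\leq\langle w,\gamma_i'(L)\rangle$, and the $w\mapsto -w$ swap forces $\nabla d_p(q)=\gamma_1'(L)=\gamma_2'(L)$; the contradiction is then that two unit-speed geodesics passing through $q$ at parameter $L$ with equal velocity coincide by uniqueness of solutions of the geodesic equation. The final remark, that such $q$ lies in $\Cut(p)$, is correctly reduced to clause (1)(b) of the characterization of $t(u)$ recalled earlier in the section. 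No gaps.
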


For the Hessian of $d_p$ we have the following result, which follows from the second variation formula.

\begin{prop}[\protect{cf.\ \cite[III, Lemma 4.10]{Sa}}]
\label{P:SAKAI_LEM4.10} Let $q\in M\setminus \{\Cut(p)\cup \{p\}\}$ and fix $u\in T_qM$. Take a normal minimal geodesic $\gamma\colon [0,d_p(q)]\to M$ joining $p$ to $q$. Let $X(t)$ be a Jacobi field along $\gamma$ satisfying the boundary condition $X(0)=0$, $X(d_p(q)) = u$, and let 
\[
X^\perp(t)= X(t) - \langle X(t), \dot{\gamma}(t)\rangle\dot{\gamma}(t)
\]
be the Jacobi field that is the vertical component of $X(t)$ with respect to $\dot{\gamma}$. Then
\begin{align}
	\Hess d_p(q)(u,u)=\langle \nabla_{\dot\gamma} X^\perp(d_p(q)),X^\perp(d_p(q))\rangle.
\end{align}
\end{prop}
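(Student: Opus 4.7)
The plan is to introduce a test geodesic through $q$ with velocity $u$, build a companion variation of $\gamma$ through minimal geodesics from $p$, and apply the second variation of energy. Write $L = d_p(q)$. Since $q\notin\Cut(p)$, the exponential $\exp_p$ is a diffeomorphism from a neighborhood of $L\dot\gamma(0)\in T_pM$ onto a neighborhood of $q$; in particular, $p$ and $q$ are not conjugate along $\gamma$, so the prescribed Jacobi field $X$ is unique. Set $c(s) = \exp_q(su)$, $v(s) = \exp_p^{-1}(c(s))$, and
\[
\alpha(s,t) = \exp_p\!\left(\tfrac{t}{L}\,v(s)\right), \qquad t\in[0,L].
\]
Each $\alpha_s := \alpha(s,\cdot)$ is a geodesic from $p$ to $c(s)$ of constant speed $\|v(s)\|/L$. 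Because $\Cut(p)$ is closed, $\alpha_s$ remains minimal for small $|s|$, so $d_p(c(s)) = \|v(s)\|$. The variation field $V(t)=\partial_s\alpha(0,t)$ is a Jacobi field with $V(0)=0$ and $V(L)=c'(0)=u$, hence $V = X$ by uniqueness.

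Next set $E(s) = \tfrac12\int_0^L \|\partial_t\alpha\|^2\,dt$. Constant speed gives $E(s) = d_p(c(s))^2/(2L)$. A routine manipulation of
\[
E'(s) = \int_0^L \langle \nabla_s\partial_t\alpha,\partial_t\alpha\rangle\,dt
\]
using $\nabla_s\partial_t\alpha = \nabla_t\partial_s\alpha$, the Jacobi equation $\nabla_t^2 X + R(X,\dot\gamma)\dot\gamma = 0$, and integration by parts yields
\[
E''(0) = \langle X(L),\nabla_{\dot\gamma}X(L)\rangle + \big\langle \nabla_s\partial_s\alpha,\dot\gamma\big\rangle\Big|_{t=0}^{t=L}.
\]
Since $\alpha(s,0)=p$ for all $s$ and $s\mapsto\alpha(s,L)=c(s)$ is itself a geodesic, the boundary term vanishes at both endpoints. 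Combined with the first variation $\tfrac{d}{ds}\big|_{s=0}d_p(c(s)) = \langle u,\dot\gamma(L)\rangle$ (Gauss's lemma, or direct from $\|v(s)\|$) and $E(s)=d_p(c(s))^2/(2L)$, one gets
\[
\Hess d_p(q)(u,u) = \langle u,\nabla_{\dot\gamma}X(L)\rangle - \tfrac{1}{L}\langle u,\dot\gamma(L)\rangle^2.
\]

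It remains to rewrite this with $X^\perp$. The function $t\mapsto\langle X(t),\dot\gamma(t)\rangle$ solves $f''(t)=0$ (since $\langle R(X,\dot\gamma)\dot\gamma,\dot\gamma\rangle=0$) with $f(0)=0$, so it is linear and equals $\tfrac{t}{L}\langle u,\dot\gamma(L)\rangle$. Therefore $X^\parallel(t) = \tfrac{t}{L}\langle u,\dot\gamma(L)\rangle\,\dot\gamma(t)$ and $\nabla_{\dot\gamma}X^\parallel(L) = \tfrac{1}{L}\langle u,\dot\gamma(L)\rangle\,\dot\gamma(L)$. Since $X^\perp\perp\dot\gamma$ identically, differentiating gives $\nabla_{\dot\gamma}X^\perp\perp\dot\gamma$. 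Splitting $\nabla_{\dot\gamma}X = \nabla_{\dot\gamma}X^\parallel + \nabla_{\dot\gamma}X^\perp$ in the previous display cancels the $\tfrac{1}{L}\langle u,\dot\gamma(L)\rangle^2$ term and reduces the right-hand side to $\langle u,\nabla_{\dot\gamma}X^\perp(L)\rangle = \langle X^\perp(L),\nabla_{\dot\gamma}X^\perp(L)\rangle$, which is the claim.

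The main technical care lies in the minimality step: one must verify that $\alpha_s$, a priori only a geodesic, is actually the shortest path from $p$ to $c(s)$, so that $d_p\circ c$ is smooth near $0$ and equals $\|v(s)\|$ exactly. This follows from $q\notin\Cut(p)$ together with the closedness of the cut locus, but it is where the non-cut-point hypothesis genuinely enters.
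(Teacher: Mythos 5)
Your argument is correct and complete: the variation $\alpha(s,t)=\exp_p(tv(s)/L)$ through minimal geodesics, the second variation of energy with vanishing boundary terms, and the splitting off of the linear tangential component $\langle X(t),\dot\gamma(t)\rangle=\tfrac{t}{L}\langle u,\dot\gamma(L)\rangle$ all check out. The paper gives no proof of its own, merely citing Sakai and noting that the result ``follows from the second variation formula,'' which is exactly the route you have carried out in detail.
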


We now consider the halved square of the distance function. Let $M$ be a complete Riemannian manifold and fix $p\in M$. Let $h_p : M\to \mathbb{R}$ be given by 
\[
q\mapsto h_p(q)= \frac{1}{2}d_p(q)^2.
\]
Then $h_p$ is of class $C^\infty$ on $M\setminus \Cut(p)$ and, for $u\in T_pM$, the following hold:
\begin{align*}
	||\nabla h_p(q)||& = d_p(q),\\ \smallskip
	\Hess h_p(u,u)  & = d_p(q)\langle \nabla_{\dot\gamma} X(d_p(q)),X(d_p(q))\rangle,
\end{align*}
where $X$ and $\gamma$ are as in Proposition~\ref{P:SAKAI_LEM4.10}.


\subsection{Stability of the cut locus} Let $M$ be a complete Riemannian manifold. Recall that $\Cut(p)\subset M$ is a closed subset of $M$ for all $p\in M$ (see, for example, \cite[Corollary 13.2.10]{dC92}). Given $p\in M$, we let $B(p,r)$ be an open ball of radius $r$ centered at $p$ and let $\Cut(B(p,r))$ be the cut locus of the ball (defined as the union of the cut loci of the points in $B(p,r)$). Recall that $M$ is \emph{closed} (resp. \emph{open}) if it is compact (resp. non-compact) and has no boundary.  


\begin{definition}[Topological stability]
	\label{D:CLC1}
	Let $M$ be a complete Riemannian manifold. The cut locus $\Cut(p)$ of a point $p\in M$ is \emph{topologically stable} if, for any open neighborhood $U(\Cut(p))$ of $\Cut(p)$ in $M$, there exists $r>0$ such that $\Cut(B(p,r))\subseteq U(\Cut(p))$. We say that  \emph{the cut locus is topologically stable} if it is topologically stable for all $p\in M$.
\end{definition}

The topological stability of the cut locus of a closed Riemannian manifold is a consequence of the following continuity result.


\begin{thm}
\label{T:CUT_LOCUS_CONTINUITY}
Let $M$ be a closed $C^\infty$-manifold and let $\{g_n\}_{n=1}^\infty$ be a sequence of $C^\infty$ Riemannian metrics on $M$ that converges to a Riemannian metric $g$ in the $C^\infty$ topology. Let $\{(p_n,v_n)\}_{n=1}^\infty\subseteq TM$ be a sequence with $v_n\in \widetilde{\Cut}(p_n)$, the tangent cut locus of $p_n$ with respect to the metric $g_n$. If $(p_n,v_n)\to (p,v)$, for some $(p,v)\in TM$, then $v\in \widetilde{\Cut}(p)$.
\end{thm}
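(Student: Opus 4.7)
The plan is to use the dichotomy from Proposition III.4.1: a tangent vector $tu$ with $u$ unit lies in $\widetilde{\Cut}(p)$ precisely when $\gamma_u|_{[0,t]}$ is a normal minimal geodesic and, in addition, either (a) $\gamma_u(t)$ is the first conjugate point of $p$ along $\gamma_u$, or (b) there exists a unit vector $w\neq u$ with $\gamma_w(t)=\gamma_u(t)$. I would first write $v_n = t_n u_n$ with $t_n = \|v_n\|_{g_n}$ and $u_n\in U_{p_n}M$. Since $M$ is closed and $g_n\to g$ in $C^\infty$, curvatures and diameters stay uniformly bounded, so by Cheeger's injectivity radius estimate $\mathrm{inj}_{g_n}(p_n)\geq c>0$ uniformly; this forces $t_n\geq c$, and together with $v_n\to v$ gives $t_n\to t:=\|v\|_g>0$ and $u_n\to u:=v/t\in U_pM$. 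Continuity of the exponential map and the distance function with respect to both base point and metric then yields $d^g(p,\exp_p^g(v)) = \lim t_n = t$, so $\gamma_u^g|_{[0,t]}$ is minimizing.

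\textbf{Dichotomy and easy cases.} For each $n$, Proposition III.4.1 says either (a$_n$) $d\exp_{p_n}^{g_n}$ is singular at $v_n$, i.e.\ $\gamma_{u_n}^{g_n}(t_n)$ is conjugate to $p_n$; or (b$_n$) there exists $w_n\in U_{p_n}M$ with $w_n\neq u_n$ and $\exp_{p_n}^{g_n}(t_n w_n)=\exp_{p_n}^{g_n}(v_n)$. Passing to a subsequence, I may assume one of the two holds throughout. If (a$_n$) holds, then reading $d\exp^{g_n}$ in a fixed coordinate chart near $\exp_p^g(v)$ (which contains $\exp_{p_n}^{g_n}(v_n)$ for large $n$), the $C^\infty$-convergence of the metrics gives $d\exp_{p_n}^{g_n}(v_n)\to d\exp_p^g(v)$ as linear maps, and non-invertibility is a closed condition, so $d\exp_p^g(v)$ is singular: $\gamma_u^g(t)$ is conjugate to $p$, and by minimality this is the first conjugate point, giving (a). If (b$_n$) holds and, along a subsequence extracted by compactness of $UM$, $w_n\to w$ with $w\neq u$, then passing to the limit $\exp_p^g(tw)=\exp_p^g(v)$, giving (b).

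\textbf{The delicate subcase and main obstacle.} The main obstacle is the degenerate subcase $w_n\to u$, where two distinct minimizing directions from $p_n$ collapse to a single direction in the limit. To extract conjugacy here, I would set $\epsilon_n=\|w_n-u_n\|$ and $a_n=(w_n-u_n)/\epsilon_n$, with $a_n\to a$ along a subsequence and $\|a\|_g=1$. Working in a coordinate chart around $\exp_p^g(v)$ and applying the fundamental theorem of calculus to the curve $s\mapsto \exp_{p_n}^{g_n}(t_n(u_n+s\epsilon_n a_n))$, whose endpoints at $s=0$ and $s=1$ coincide, yields
\[
0 = \int_0^1 \bigl(d\exp_{p_n}^{g_n}\bigr)\bigl(t_n(u_n+s\epsilon_n a_n)\bigr)(t_n a_n)\, ds.
\]
Passing to the $n\to\infty$ limit (the integrand converges uniformly by $C^\infty$-convergence of the metrics and continuity of $d\exp$) gives $(d\exp_p^g)(tu)(ta)=0$ with $a\neq 0$, so $d\exp_p^g$ is singular at $v$. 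As in the previous case, minimality of $\gamma_u^g|_{[0,t]}$ upgrades this to condition (a), and $v\in\widetilde{\Cut}(p)$ in all cases.
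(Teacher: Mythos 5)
Your proof is correct and follows essentially the same route as the paper's: $C^\infty$-convergence of the exponential maps, the conjugate-point/multiple-minimizer dichotomy from Proposition III.4.1, and a separate treatment of the degenerate subcase in which the second minimizing direction collapses onto the first. You are in fact somewhat more careful than the paper --- you explicitly verify minimality of the limit geodesic and rule out $v=0$ via a uniform injectivity radius bound, and your fundamental-theorem-of-calculus argument for the singularity of $d\exp_p^g$ at $v$ is just the direct (contrapositive) form of the paper's inverse-function-theorem contradiction.
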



\begin{proof}
Let $\exp_n$ and $\exp$ denote, respectively, the exponential map of $(M,g_n)$ and of $(M,g)$. Since $g_n\to g$ in $C^\infty$ and  the geodesic equations involve only first order derivatives of the metrics,  $\exp_n \to \exp$ in $C^\infty$. Let $p_n\in (M,g_n)$, $v_n\in \widetilde{\Cut}(p_n)\subseteq T_{p_n}M$, and suppose that $(p_n,v_n)\to (p,v)$, with $p\in M$ and $v\in T_pM$. Then, after maybe passing to a subsequence, one of the following holds:
\\
	\begin{itemize}
		\item[(a)] $\det(d((\exp_{n})_{p_n})_{v_n}) = 0$ and this passes to the limit, since the sequence $\{(\exp_{n})_{p_n}\}_{n=1}^\infty$ converges to $\exp_p$ in $C^\infty$, or\\
		
		\item[(b)] for each $n$ there exists $w_n\in T_{p_n}M$ with $w_n\neq v_n$ and $|w_n|=|v_n|$ such that $(\exp_{n})_{p_n}(v_n) = (\exp_{n})_{p_n}(w_n)$, the geodesics $t\mapsto (\exp_{n})_{p_n}(tv_n)$ and $s\mapsto (\exp_{n})_{p_n}(sw_n)$, $0\leq s,t\leq |v_n|$  minimize and $w_n\to w$ for some $w\in T_pM$. If $v=w$, then $\det (d(\exp)_v)=0$. Otherwise, $(\exp_{n}){p_n}$ would be bijective on a neighborhood of $(p,v)$, contradicting the existence of the $p_n$, $v_n$ and $w_n$. If $v\neq w$, we are done as well, so $v$ must lie in the tangent cut locus of $p$ with respect to $g$.\\
	\end{itemize}

	Thus, we conclude in both cases that $v\in \widetilde{\Cut}(p)$.
\end{proof}


\begin{cor}
\label{COR:CONT_CUT_LOCUS}
If $M$ is a closed Riemannian manifold, then the cut locus is topologically stable. 
\end{cor}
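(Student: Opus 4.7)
The plan is to prove the corollary by contradiction, reducing the statement to the continuity result of Theorem~\ref{T:CUT_LOCUS_CONTINUITY} applied to the constant sequence of metrics $g_n = g$.

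Suppose, for contradiction, that there exists $p \in M$ for which $\Cut(p)$ is not topologically stable. Then one can fix an open neighborhood $U \supseteq \Cut(p)$ in $M$ such that for every $r = 1/n$ there exist $p_n \in B(p,1/n)$ and $q_n \in \Cut(p_n) \setminus U$. Writing $q_n = \exp_{p_n}(v_n)$ with $v_n \in \widetilde{\Cut}(p_n) \subseteq T_{p_n}M$, the first step is to extract a convergent subsequence. Since $M$ is closed it has finite diameter, so $|v_n| = \dd(p_n,q_n) \leq \diam(M,g)$. Thus $\{(p_n,v_n)\}$ lies in a compact subset of the tangent bundle $TM$, and after passing to a subsequence we obtain $(p_n,v_n) \to (p,v)$ for some $v \in T_pM$.

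The second step is to apply Theorem~\ref{T:CUT_LOCUS_CONTINUITY} to the trivial sequence $g_n \equiv g$, which plainly converges to $g$ in $C^\infty$. The hypotheses are satisfied (each $v_n$ lies in the tangent cut locus of $p_n$ with respect to $g_n = g$), so the conclusion gives $v \in \widetilde{\Cut}(p)$, hence $q := \exp_p(v) \in \Cut(p) \subseteq U$. On the other hand, continuity of the exponential map yields $q_n = \exp_{p_n}(v_n) \to \exp_p(v) = q$, and since each $q_n$ lies in the closed set $M \setminus U$, the limit point $q$ must lie in $M \setminus U$. This contradicts $q \in U$ and completes the proof.

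The argument is short and the only place where work is needed is the compactness step and the invocation of Theorem~\ref{T:CUT_LOCUS_CONTINUITY}; neither is a real obstacle, since closedness of $M$ supplies both the diameter bound on $|v_n|$ (so that $(p_n,v_n)$ does not escape to infinity in $TM$) and the $C^\infty$-regularity framework in which the continuity theorem was established. The essential geometric content is thus entirely encapsulated in Theorem~\ref{T:CUT_LOCUS_CONTINUITY}; the corollary is merely the specialization where the metric is held fixed and only the base point varies.
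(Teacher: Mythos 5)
Your proof is correct and follows essentially the same route as the paper's: argue by contradiction, extract a sequence $(p_n,v_n)$ with $v_n\in\widetilde{\Cut}(p_n)$ and $\exp_{p_n}(v_n)\notin U$, use the diameter bound to get a convergent subsequence in $TM$, apply Theorem~\ref{T:CUT_LOCUS_CONTINUITY} with the constant sequence of metrics, and conclude via continuity of the exponential map. Your write-up is in fact slightly more explicit than the paper's (spelling out the bound $|v_n|=\dd(p_n,q_n)\leq\diam(M,g)$ and the closedness of $M\setminus U$), but the argument is the same.
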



\begin{proof}
	Suppose, for the sake of contradiction, that the conclusion of the corollary does not hold. Then there exist a point $p\in M$, an open neighborhood $U(\Cut(p))$  and a sequence of points $p_n\in M$ and tangent vectors $v_n\in \widetilde{\Cut}(p_n)$ such that $p_n\to p$ and $\exp_{p_n}(v_n)\notin U(\Cut(p))$. Since $M$ is compact, its diameter is finite, so the sequence $\{(p_n,v_n)\}$ lies in a compact subset of the tangent bundle $TM$. Hence, after passing to a subsequence if necessary, there exists $v\in T_pM$ such that $(p_n,v_n)\to (p,v)$. Thus, by Theorem~\ref{T:CUT_LOCUS_CONTINUITY}, $v\in \widetilde{\Cut}(p)$. Since the exponential map is continuous, $\exp_{p_n}(v_n) \to \exp_p(v)\in \Cut(p)$, which yields a contradiction.  
\end{proof}

Compactness is necessary in Corollary~\ref{COR:CONT_CUT_LOCUS}. Indeed, the cut locus may not be topologically stable for open (i.e. complete and non-compact) manifolds, as the following examples show. 


\begin{example}[The flat cylinder]
\label{EX:CYLINDER_CLC}In a flat cylinder $C=\sphere^1\times\RR$, the cut locus $\Cut(p)$ of a point $p\in C$ is a line $\ell$ opposite to the line passing through $p$. Hence, the cut locus of a small open neighborhood around $p$ is isometric to a flat band  $(-\varepsilon,\varepsilon)\times \RR$ with $\{0\}\times\RR = \ell$, for some small $\varepsilon >0$. Hence, any open neighborhood of $\Cut(p)=\ell$ that does not contain a band centered at $\ell$ with fixed width yields a counterexample to the topological stability condition. We explicitly construct such a counterexample below.
     
     Consider $C=\CC/\sim$, with $x+iy = z\sim z' = x'+iy'$ if and only if $(x-x')/2\pi\in \ZZ$ and $y=y'$, equipped with the canonical, quotient geometry. For convenience, we identify $C$ with $\{z=x+iy\in \CC: -\pi \leq x\leq \pi\}$ and obtain a locally flat space which is not of global nonpositive curvature.  Rather, since geodesic segments are either vertical segments or traces on $C$ of straight lines in $\CC$, the set
    \[
    \Cut(iy_0) = \{-\pi + y: y\in \RR\}\,,
    \] is non-empty for any $y_0\in \RR$, and 
         \begin{eqnarray*}
      U(\Cut(iy_0)) &=& \{x + iy\in C: \pi -x < 1/|y| < \pi\mbox{ or }\pi + x < 1/|y| < \pi \}\\
      &&~\cup\, \{x+iy\in C: 0 \neq x \mbox{ and } |y|\pi \leq 1\} 
    \end{eqnarray*}
    is a neighborhood of $\Cut(iy_0)$. This neighborhood has the property that for every vertical line $L_{r}=\{z\in \HH: \Re(z) = \pi -r\}$, $0<r<\pi$, we have $L_r \not\subseteq U(\Cut(iy_0))$. However, since every geodesic ball $B(iy_0,r)$ of radius $0<r<\pi$ about $iy_0$ 
    has the property that  $L_{r/2}\subseteq \Cut\big(B(iy_0,r)\big)$ we have that $ \Cut\big(B(iy_0,r) \not\subseteq U(\Cut(iy_0))$ for every $0<r<\pi$, i.e. the cut locus is not topologically stable.\hfill $\square$
\end{example}

    In Example~\ref{EX:CYLINDER_CLC} above, $\Cut\big(B(iy_0,r)\big) = B(\Cut(iy_0,r))$. Thus the following condition, which is weaker than the topological stability of the cut locus, holds on the flat cylinder.


\begin{definition}[Metric stability]
\label{D:CLC1*}
Let $M$ be a complete Riemannian manifold. Let $p\in M$ and let $B(p,r)$ be an open ball of radius $r$ centered at $p$. Let $\Cut(B(p,r))$ be the cut locus of the ball (defined as the union of the cut loci of the points in $B(p,r)$ and let $B(\Cut(p),r)$ denote the open $r$-neighborhood of $\Cut(p)$. We say that the cut locus $\Cut(p)$ of a point $p\in M$ is \emph{metrically continuous at $p\in M$} if, for any $\varepsilon>0$, there exists $\delta>0$ such that $\Cut(B(p,\delta))\subseteq B(\Cut(p),\varepsilon)$. We say that \emph{the cut locus is metrically stable} if $\Cut(p)$ is metrically stable  for all $p\in M$. 
\end{definition}

Note that topological stability implies metric stability. Example~\ref{EX:CYLINDER_CLC} shows that metric stability does not imply topological stability when the manifold is non-compact.  Assuming compactness, however, metric stability does imply topological stability. Hence, in the compact case, both properties are equivalent. 


\begin{prop}
	\label{T:CUT_LOCUS_STABLE}
	Let $M$ be a closed Riemannian manifold. If the cut locus is metrically stable, then it is topologically stable.
\end{prop}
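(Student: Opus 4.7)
The strategy is to exploit compactness to convert any topological neighborhood of $\Cut(p)$ into a metric neighborhood, so that metric stability immediately yields topological stability. This is essentially the standard fact that on a compact metric space, the open neighborhoods of a closed set are cofinal with its metric $\varepsilon$-neighborhoods.

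Concretely, I would fix $p \in M$ and an arbitrary open neighborhood $U(\Cut(p))$ of $\Cut(p)$. Since $M$ is closed, it is compact, and $\Cut(p)$ is a closed subset of $M$ (as recalled in the excerpt), hence compact. The complement $M \setminus U(\Cut(p))$ is also closed in $M$ and thus compact, and it is disjoint from $\Cut(p)$. By a standard argument in metric spaces, the distance between two disjoint nonempty compact sets is strictly positive, so
\[
\varepsilon := \dd\bigl(\Cut(p),\, M \setminus U(\Cut(p))\bigr) > 0.
\]
This immediately implies the inclusion $B(\Cut(p), \varepsilon) \subseteq U(\Cut(p))$.

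Now I would invoke the hypothesis of metric stability at $p$ (Definition~\ref{D:CLC1*}) applied to this $\varepsilon$: there exists $\delta > 0$ such that
\[
\Cut\bigl(B(p,\delta)\bigr) \;\subseteq\; B(\Cut(p),\varepsilon) \;\subseteq\; U(\Cut(p)),
\]
which is precisely the topological stability condition at $p$ with $r = \delta$. Since $p \in M$ was arbitrary, topological stability of the cut locus of $M$ follows.

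There is no real obstacle here: the only content is the passage from an abstract open neighborhood to a uniform metric neighborhood, which is handled entirely by the compactness of $M$ (and hence of $\Cut(p)$ and $M \setminus U$). This also clarifies why Example~\ref{EX:CYLINDER_CLC} does not violate Proposition~\ref{T:CUT_LOCUS_STABLE}: in the non-compact cylinder, the cut locus $\Cut(p)$ is an unbounded line, and one can choose an open neighborhood $U(\Cut(p))$ that pinches to zero width at infinity, so that $M \setminus U$ fails to be compact and $\varepsilon$ above may vanish.
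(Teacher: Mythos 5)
Your proof is correct and follows essentially the same route as the paper's: use compactness of $\Cut(p)$ and of $M\setminus U(\Cut(p))$ to extract an $\varepsilon>0$ with $B(\Cut(p),\varepsilon)\subseteq U(\Cut(p))$, then apply metric stability. The only (harmless) addition is your explicit note that the sets are disjoint and nonempty; the degenerate case $U(\Cut(p))=M$ is trivial anyway.
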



\begin{proof}
Suppose that the cut locus is metrically stable, fix $p\in M$ and let $U(\Cut(p))$ be an open neighborhood of  $\Cut(p)$ in $M$. Since $M$ is compact and $\Cut(p)\subset M$ is a closed subset of $M$, it follows that $\Cut(p)$ is compact. Since $M\setminus U(\Cut(p))$ is also compact and the distance function $\dd:M\times M \to \RR$ is continuous, there exists $\varepsilon>0$ such that $B(\Cut(p),\varepsilon)\subseteq U(\Cut(p))$. Since the cut locus is metrically stable, there exists $\delta>0$ such that $\Cut(B(p,\delta))\subseteq B(\Cut(p),\varepsilon)\subseteq U(\Cut(p))$ and the conclusion follows.
\end{proof}

In view of Proposition~\ref{T:CUT_LOCUS_STABLE}, when $M$ is compact we can simply say that the cut locus is \emph{stable}. There are, however, open Riemannian manifolds on which not even metric stability holds.


\begin{example}[The Beltrami trumpet]
    We start with the upper half model of the hyperbolic plane, i.e.\ the complex upper half plane $\HH = \{z=x+iy: x\in \RR, y >0\}$ equipped with the \emph{hyperbolic Riemannian metric}
    \[
    ds^2 = \frac{dx^2+dy^2}{y^2}
    \]
      leading to the \emph{hyperbolic distance}
    \begin{eqnarray}\label{hyperbolic-dist:eq}
    \hspace*{1cm}d_{\HH}(x_1 + iy_1, x_2+iy_2) &=& \arcosh \left(1 + \frac{(x_1-x_2)^2 + (y_1-y_2)^2}{2y_1y_2}\right)\,, 
    \end{eqnarray}
    for $z_1=x_1+iy_1, z_2=x_2+iy_2\in \HH$ (see, for example, \cite[Ch.\ 1]{Katok1992}).
    Further, consider its quotient 
    \[
    T = \{[z]: z\in \HH\},\]
     where
    \[
    [z] = \{z+2k\pi : k \in\ZZ\},
    \]
    equipped with the canonical quotient geometry. The Riemannian manifold $T$ is well known under various names, among others the \emph{Beltrami trumpet} (see \cite[Fig.\ 1.60]{Be} and compare with \cite[Ch.\ 5.2]{Sti}, where this manifold is called the \emph{complete pseudosphere}). 
    For convenience, we identify $T$ with
    $$\{z=x+iy: -\pi \leq x < \pi, y >0\}\,$$
    and obtain the quotient distance for $z,w\in T$ given by
    $$ d(z,w) = \min_{k\in\{-1,0,1\}} d_{\HH}(z,w+2k\pi)\,.$$
    Notably, $T$ inherits from $\HH$ constant negative sectional curvature $-1$, but it is no longer of global nonpositive curvature in the triangle comparison sense, i.e.\ it is not a $\mathrm{CAT}(0)$ space, since it is not simply connected.  Rather, since geodesic segments are either vertical segments or traces on $T$ of arc segments on circles vertical to the real line,  
    $$\emptyset \neq \Cut(iy_0) = \{-\pi + y: y>0\}\,,$$ for any $y_0>0$, and 
    for given $\delta>0$, setting $\alpha = (\cosh \delta -1)/2$, 
    \begin{eqnarray*}U(\Cut(iy_0)) &=& \{x + iy\in \HH:  \alpha y> \pi\}\\
    &&~\cup\, \\
    && \{x+iy\in \HH: 0\leq \pi+x < \alpha y\leq \pi\mbox{ or } 0<\pi - x < \alpha y\leq \pi\}\,,
    \end{eqnarray*}
    is a neighborhood of $\Cut(iy_0)$ which, by construction, contains the neighborhood $B(\Cut(iy_0),\delta)$ with hyperbolic $\delta$-distance to $\Cut(iy_0)$.
    For every $\delta>0$ and for every vertical half line $L_{r}=\{z\in \HH: \Re(z) = \pi -r\}$, $0<r<\pi$, we have $L_r \not\subseteq U_\delta(\Cut(iy_0))$. However, since every hyperbolic ball $B(iy_0,r)$ about $iy_0$ of radius $0<r<\arsinh (\pi/y_0)$ is equal to a suitable Euclidean ball 
    $$ B(iy_0,r) = \{z\in \CC: |iy_0\cosh r -z|^2 < y_0\sinh r\}\,,$$
    we have that  $L_{y_0(\sinh r)/2}\subseteq \Cut\big(B(iy_0,r)\big)$.
    In consequence, $ \Cut\big(B(iy_0,r) \not\subseteq B(\Cut(iy_0),\delta)$ for every $\delta, r >0$, i.e.\ the cut locus is not metrically stable.
    
    \hfill $\square$
\end{example}


\begin{rem}
It is tempting to frame the stability notions in Definitions~\ref{D:CLC1} and \ref{D:CLC1*} in terms of the continuity  of the \emph{cut locus function} $\Cut\colon p\mapsto \Cut(P)$, which maps $p\in M$ to its cut locus $\Cut(p)\subseteq M$ and takes values in the set $\mathscr{C}(M)$ of closed subsets of $M$. Note that $\mathscr{C}(M)$ is itself a subset of $\mathscr{P}(M)$, the power set of $M$. Both $\mathscr{C}(P)$ and $\mathscr{P}(M)$ may be topologized in different ways, e.g.\ one may consider the topology on $\mathscr{C}(M)$ induced by the Hausdorff metric (cf.\ \cite[Ch.\ 7]{BBI}) or, on $\mathscr{P}(M)$, the topology generated by the collection 
\[
\beta=\{\, \mathscr{B}\subset \mathscr{P}(X) : \cup_{B\in \mathscr{B}}B \text{ is an open subset of $X$}\,\}.
\]
Besides some obvious observations, it is not clear at the moment what are the precise relations between the continuity of the cut locus function and the stability notions defined above.
\end{rem}


\section{A Riemannian Central Limit Theorem}
\label{S:RCLT}

In this section we prove Theorem~\ref{T:MAIN_THM}. It will be a corollary to  Bhattacharya and Lin's Omnibus Central Limit Theorem, which we recall below, under the assumption that the topological stability of the cut locus holds. We conclude with Example~\ref{EX:CYLINDER_CONT}, which shows that the hypotheses in \cite[Corollary 2.3]{BL} are not sufficient to derive its conclusions, which correspond to those in Theorem~\ref{T:MAIN_THM}.


\subsection{The Omnibus Central Limit Theorem} Let $(Q,\dd,\mu)$ be a metric measure space and consider the following conditions (cf.\ \cite[Section 2]{BL}):
\\

\begin{itemize}
	\item[(A1)] The Fr\'echet mean $q_o$ of $\mu$ exists and is unique.\\
	\item[(A2a)] There exists a measurable set $U\subseteq Q$ with $q_o\in U$, an open subset $V\subseteq\mathbb{R}^m$, for some $m\geq 1$, and a homeomorphism
	\[
	\varphi: U\subseteq X\to V\subseteq\mathbb{R}^m.
	\]
	Here $U$ is given the relative topology on $Q$.
	\\
	\item[(A2b)] For $\mu$-a.e. $p\in Q$, the function $h(\cdot,p):V\subseteq \mathbb{R}^n\to \mathbb{R}$ given by
	\[
	v\mapsto h(v,p) = \dd^2(\varphi^{-1}(v),p)
	\]
	is twice continuously differentiable, i.e.\ $C^2$, on $V\subseteq \mathbb{R}^m$.\\
	\item[(A3)] $\Prb(q_o^n\in U)\to 1$ as $n\to \infty$.\\
	\item[(A4)] Let $1\leq j,j'\leq n$, fix $p\in Q$ (where the derivatives exist), and let
	\[
	D_j h_p(v) = \frac{\partial h_p}{\partial x_j}(v)\in \mathbb{R},
	\]
	where $x_j$ denote coordinates in $V\subseteq \mathbb{R}^m$.
	
	We let 
	\[
	D_{j,j'}h_p(v) =D_jD_{j'}h_p(v).
	\] 
	Then
	\begin{align}
	\EE|D_jh_{Y_1}(\varphi(q_o))|^2 & <\infty \\
	\EE|D_jD_{j'}h_{Y_1}(\varphi(q_o))|&<\infty \text{ for $j,j'=1,\ldots,m$.}
	\end{align}
	\\
	
	\item[(A5)] (Locally uniform $L^1$ smoothness of the Hessian). Let $1\leq j,j'\leq m$, fix $p\in Q$ (where the derivatives are defined), and let $\varepsilon>0$. Define
	\[
	u_{j,j'}(\varepsilon,p) = \sup\{|D_{j,j'}h(w,p)-D_{j,j'}h(\varphi(q_o),p)|  :  |w-\varphi(q_o)|<\varepsilon \}.
	\]
	Then
	\begin{align}
		\EE|u_{j,j'}(\varepsilon,Y_1)|\to 0 \text{ as  $\varepsilon\to 0$}. 
	\end{align}
	\\
	\item[(A6)] (Non-singularity of the Hessian) The matrix
	\[
	\Lambda = (\EE D_{j,j'}h(\varphi(q_o),Y_1))_{j,j'=1,\ldots,m}
	\]
	is non-singular.
\end{itemize}


\begin{thm}[\protect{Omnibus Central Limit Theorem (OCLT), cf.\ \cite[Theorem 2.2]{BL}}]
	\label{T:OCLT} Let $(Q,\dd,\mu)$ be a metric measure space, let $Y_1,\ldots,Y_n \sim \mu$ be $n\geq 1$ independent random variables and suppose that conditions (A1)--(A6) hold. Let $C$ be the covariance matrix of 
	\[
	\{D_jh(\varphi(q_o),Y_1) : j=1,\ldots,m\}.
	\]
	Then
	\begin{align}
	\label{EQ:CLT_THM}
	\sqrt{n}(\varphi(q^n_o)-\varphi(q_o)) \stackrel{\mathcal{L}}{\longrightarrow} N(0,\Lambda^{-1}C\Lambda^{-1}) \text{ as $n\to \infty$}.
	\end{align}
\end{thm}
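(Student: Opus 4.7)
The plan is to treat this as a standard M-estimator (Z-estimator) central limit argument carried out in the Euclidean chart supplied by $\varphi$. Set $v_0 := \varphi(q_o)$ and, for $v \in V$, write $H_n(v) := \frac{1}{n}\sum_{k=1}^n h(v,Y_k)$ and $H(v) := \EE h(v,Y_1)$. By (A1), $v_0$ is the unique minimizer of $H$ on $V$, and by the definition of the Fr\'echet sample mean together with (A3), $\varphi(q_o^n)$ is a minimizer of $H_n$ lying in the open set $V$ on an event whose probability tends to one.

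I would first verify the population score equation $\EE[D_j h(v_0, Y_1)] = 0$ for $j=1,\ldots,m$ by differentiating $H$ under the integral sign, justified by the pointwise $\mu$-a.e.\ differentiability from (A2b) together with the integrable domination supplied by (A4)--(A5) in a neighborhood of $v_0$. These same hypotheses show that $H$ is $C^2$ near $v_0$ with Hessian equal to the matrix $\Lambda$ of (A6), so that $v_0$ is a non-degenerate interior critical point of $H$.

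On the event $\{q_o^n \in U\}$, $\varphi(q_o^n)$ is an interior minimizer of the almost-surely $C^2$ function $H_n$ (the $C^2$-regularity coming from (A2b) applied to each $Y_k$), so it satisfies the stationarity condition $\frac{1}{n}\sum_{k=1}^n D_j h(\varphi(q_o^n), Y_k) = 0$ for each $j$. Applying the integral form of Taylor's theorem coordinatewise around $v_0$ yields
\[
0 \;=\; \frac{1}{n}\sum_{k=1}^n D_j h(v_0, Y_k) \;+\; \bigl(\Lambda_n\,(\varphi(q_o^n) - v_0)\bigr)_j,
\]
where $\Lambda_n$ is the random $m\times m$ matrix with entries $(\Lambda_n)_{j,j'} = \int_0^1 \frac{1}{n}\sum_{k=1}^n D_{j,j'} h\bigl(v_0 + t(\varphi(q_o^n)-v_0), Y_k\bigr)\,dt$.

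Three ingredients then assemble the conclusion. For the score term, the classical multivariate CLT applied to the i.i.d.\ centered vectors $\bigl(D_j h(v_0, Y_k)\bigr)_{j=1}^m$, whose components have finite second moments by (A4), gives $\sqrt{n}\,n^{-1}\sum_{k=1}^n \bigl(D_j h(v_0, Y_k)\bigr)_{j=1}^m \stackrel{\mathcal{L}}{\longrightarrow} N(0,C)$. The step I expect to be the main obstacle is the convergence $\Lambda_n \to \Lambda$ in probability, which requires uniform control of a random Hessian average evaluated at a random point. I would establish it by combining the consistency $\varphi(q_o^n)\to v_0$ in probability (following from general Fr\'echet-mean consistency under (A1) and the weak convergence of $\mu_n$ to $\mu$) with the locally uniform $L^1$-smoothness (A5): the latter bounds $\EE\bigl|D_{j,j'} h(w,Y_1) - D_{j,j'} h(v_0,Y_1)\bigr|$ uniformly in $\{w:|w-v_0|<\eps\}$, so conditioning on the high-probability event $\{|\varphi(q_o^n)-v_0|<\eps\}$, letting $\eps \downarrow 0$, and combining with the weak law of large numbers applied to $n^{-1}\sum_k D_{j,j'} h(v_0, Y_k) \to \Lambda_{j,j'}$ (the limit existing by (A4)) yields the required in-probability convergence. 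By (A6), $\Lambda$ is invertible, so $\Lambda_n$ is invertible with probability tending to one and $\Lambda_n^{-1} \to \Lambda^{-1}$ in probability. Solving the displayed Taylor identity for $\sqrt{n}(\varphi(q_o^n) - v_0)$ and invoking Slutsky's theorem produces the claimed limit $\sqrt{n}(\varphi(q_o^n) - \varphi(q_o)) \stackrel{\mathcal{L}}{\longrightarrow} N(0,\Lambda^{-1} C \Lambda^{-1})$.
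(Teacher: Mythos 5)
The paper does not actually prove this theorem: it is imported verbatim from Bhattacharya--Lin \cite[Theorem 2.2]{BL} and used as a black box in the proof of Theorem~\ref{T:MAIN_THM}, so there is no internal proof to compare against. Your argument is, however, essentially the proof given in \cite{BL} (going back to \cite{BP2005}): the standard M-estimation scheme of (i) the population score equation at the interior minimizer $v_0$, (ii) the sample stationarity equation on $\{q_o^n\in U\}$, (iii) a Taylor expansion with integral-form Hessian remainder, and (iv) the multivariate CLT for the score combined with the in-probability convergence $\Lambda_n\to\Lambda$ (which is precisely what (A5) is designed to deliver) and Slutsky. I see no gap of substance. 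Two small points worth making explicit if you write this up: the Taylor identity requires the segment from $v_0$ to $\varphi(q_o^n)$ to lie in $V$, which holds only on a high-probability event since $V$ need not be convex; and the consistency $\varphi(q_o^n)\inPrb v_0$ that you need for the Hessian step is genuinely stronger than the literal statement of (A3) as given here ($\Prb(q_o^n\in U)\to 1$ for the fixed chart domain $U$), so you are right to source it from the Ziezold/Bhattacharya--Patrangenaru strong laws together with (A1) --- this is also how the paper itself reads (A3) in its remark (RA3).
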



\subsection{The hypotheses of the OCLT in the Riemannian case} In preparation for the proof of Theorem~\ref{T:MAIN_THM}, we make the following observations on conditions (A1)-(A6) above in the case where the metric measure space is a complete Riemannian  $n$-manifold equipped with some probability measure. We will always assume that the Riemannian metric is $C^\infty$. \\
\begin{itemize} 
	\item[(RA1)] Condition (A1) holds for complete, simply connected Riemannian manifolds with non-positive sectional curvature, the so-called \emph{Ha\-da\-mard manifolds}). \\
	
	\item[(RA2a)] Condition (A2a) holds for topological manifolds (hence for Riemannian manifolds) and, more generally, for metric measure spaces that are locally homeomorphic to open subsets of $\mathbb{R}^n$. Note that the condition that $U$ be measurable is redundant. Indeed, via the homeomorphism $\varphi$, the set $U$ is open and hence measurable.\\
	
	\item[(RA2b)] Let $M$ be a complete Riemannian manifold with Fr\'echet mean $q_o$ for the volume measure. The fact that the volume of the cut locus of any point in $M$ follows from the independent work of several authors. Indeed, as pointed out in \cite[Section 2.4]{AG}, by the work of Barden and Le \cite{BL}, Hebda \cite{He}, Itoh and Tanaka \cite{IT}, or Li and Nirenberg \cite{LN}, the Hausdorff dimension of the cut locus of any point in $M$ is an integer and is at most $n-1$. Hence, the cut locus of any point in $M$ has volume zero.
	Notably, if instead of the volume one considers a finite measure $\mu$ on $M$, then, under certain mild assumptions, $\Cut(q_o)$ has $\mu$-measure zero due to Le and Barden's Theorem \cite[Theorem 1]{LB}. For a more detailed discussion of the structure of the cut locus up to a set of Hausdorff codimension three, we refer the reader to the work of Angulo Ardoy and Guijarro \cite{AG}.
	Let $U$ be a normal neighborhood of the Fr\'echet mean $q_o\in M$. If $\Cut(U)$ has measure zero, then condition (A2b) holds. 
	\\
	
	\item[(RA3)] Condition (A3) holds by Ziezold's Strong Law of Large Numbers \cite{Zi}.  
\end{itemize}
 It is not clear under which geometric assumptions conditions (A4)--(A6) hold. 
 Elucidating this matter is a problem for future consideration. With these remarks in hand, we are ready to prove Theorem~\ref{T:MAIN_THM}.


\subsection{Proof of Theorem~\ref{T:MAIN_THM}}
The main result is a corollary to the Omnibus Central Limit Theorem (Theorem~\ref{T:OCLT} above). Thus, we only need to verify that the hypotheses of this theorem hold under the assumptions of Theorem~\ref{T:MAIN_THM}. By our remarks in the preceding subsection, the hypotheses in Theorem~\ref{T:MAIN_THM} imply conditions (A1), (A2a) and (A3)--(A6) hold. Therefore, to apply the Omnibus Central Limit Theorem, we need only verify that condition (A2b) holds. 

 Since $M$ is closed, it follows from Corollary~\ref{COR:CONT_CUT_LOCUS} that the cut locus is topologically stable. Thus, our two key hypotheses are the following:
\begin{itemize}
	\item[(B)] \emph{The cut locus is topologically stable.}
	
	\item[(C)] \emph{There exists a neighborhood $W$ of $\Cut(q_o)\subseteq M$ such that $\mu(W)=0$.}
\end{itemize}

We will show that 
\begin{center}
	Conditions (B) and (C) $\Rightarrow$ Condition (A2b).
\end{center} 
Note that condition (C) alone does not imply (A2b) (see Example~\ref{EX:CYLINDER_CONT} below).


 Consider now the modified condition
\begin{itemize}
  \item[(C')] \emph{There exists an open ball $B(q_o,r)$ of radius $r>0$ centered at $q_o$ such that $\mu(\Cut(B(q_0,r))) = 0$.}
\end{itemize}
Then it is clear that (C') follows from conditions (B) and (C). The following lemma concludes the proof of Theorem~\ref{T:MAIN_THM}.


\begin{lem} 
Condition (C') implies condition (A2b). 
\end{lem}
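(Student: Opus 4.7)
The plan is to produce, under (C'), a $\mu$-null set $N \subseteq M$ such that for every $p \in M \setminus N$ the function $h(\cdot, p)$ is $C^\infty$ (in particular $C^2$) on all of $V$. The two key inputs are the fact, recalled just after Proposition~\ref{P:SAKAI_LEM4.10}, that $h_p = \tfrac{1}{2}d_p^2$ is of class $C^\infty$ on $M \setminus \Cut(p)$ (the singular point $p$ itself causes no trouble once one squares the distance), and the symmetry of the cut locus relation, $q \in \Cut(p) \iff p \in \Cut(q)$, which will let me translate a condition on $\Cut(q)$ for $q$ near $q_o$ into a condition on those $p$ that can spoil smoothness on $U_{q_o}$.

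First, I would, if necessary, shrink $U_{q_o}$ so that $U_{q_o} \subseteq B(q_o, r)$. Theorem~\ref{T:MAIN_THM} allows any open normal neighborhood of $q_o$, and the Omnibus CLT conclusion is insensitive to such shrinking, since by (A3) the empirical Fr\'echet mean $q_o^n$ lies in any prescribed neighborhood of $q_o$ with probability tending to $1$. With this arrangement, the definition of the cut locus of a set gives
\[
\Cut(U_{q_o}) \;=\; \bigcup_{q \in U_{q_o}} \Cut(q) \;\subseteq\; \Cut(B(q_o, r)),
\]
and hypothesis (C') yields $\mu(\Cut(U_{q_o})) = 0$. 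I take $N := \Cut(U_{q_o})$.

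Next, for any $p \in M \setminus N$, one has $p \notin \Cut(q)$ for every $q \in U_{q_o}$; by symmetry of the cut point relation this is equivalent to $q \notin \Cut(p)$ for every $q \in U_{q_o}$, i.e.\ $U_{q_o} \cap \Cut(p) = \emptyset$. Consequently $d_p^2$ is $C^\infty$ on $U_{q_o}$, and pulling back by the diffeomorphism $\varphi^{-1}: V \to U_{q_o}$ gives that $h(\cdot, p) = d_p^2 \circ \varphi^{-1}$ is $C^\infty$, hence $C^2$, on $V$. Since this holds for every $p \in M \setminus N$ and $\mu(N) = 0$, condition (A2b) is established, completing the reduction of Theorem~\ref{T:MAIN_THM} to the Omnibus CLT.

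I do not expect any real obstacle: the proof is essentially the combination of two classical facts (smoothness of $d_p^2$ off $\Cut(p)$ and symmetry of cut points, cf.\ \cite{dC92}) with the set-theoretic inclusion $\Cut(U_{q_o}) \subseteq \Cut(B(q_o,r))$. The only point worth being careful about is the implicit shrinking of the normal neighborhood, but as indicated above this does not weaken the conclusion of Theorem~\ref{T:MAIN_THM}.
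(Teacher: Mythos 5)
Your proposal is correct and follows essentially the same route as the paper's proof: both use the smoothness of $d_p^2$ off $\Cut(p)$, the symmetry $q\in\Cut(p)\iff p\in\Cut(q)$, and the inclusion of (the preimage of) $V$ in $B(q_o,r)$ so that (C') supplies a single $\mu$-null exceptional set. Your version is marginally more careful in explicitly naming the null set $N=\Cut(U_{q_o})$ up front, which makes the order of quantifiers in (A2b) (one null set working for all $v\in V$) transparent, but the substance is identical.
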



\begin{proof}
  The function $v\mapsto h(v,p) = \dd^2(\varphi^{-1}(v),p)$ is $C^2$ at $v$, if $\varphi^{-1}(v) \notin \Cut(p)$. For the function to be $C^2$ in $v$ for $\mu$-a.e. $p\in Q$, it is thus sufficient that $\varphi^{-1}(v) \notin \Cut(p)$ for $\mu$-a.e. $p\in Q$. Since $\varphi^{-1}(v) \in \Cut(p)$ if, and only if, $p \in \Cut(\varphi^{-1}(v))$, it is thus sufficient that $p \notin \Cut(\varphi^{-1}(v))$ for $\mu$-a.e. $p\in Q$. Now let $V = B(\varphi(q_o), r')$ such that $\varphi^{-1}(V) \subseteq B(q_o,r)$. Then we have from (C') that $\mu(\Cut(\varphi^{-1}(V))) = 0$ and thus we get for every $v \in V$ that $v\mapsto h(v,p) = \dd^2(\varphi^{-1}(v),p)$ is $C^2$ at $v$.
\end{proof}

The topological stability of the cut locus, corresponding to condition (B) above, along with condition (C), played a key role in the proof of Theorem~\ref{T:MAIN_THM}. In \cite[Corollary 2.3]{BL}, which claims the same conclusions as Theorem~\ref{T:MAIN_THM}, only condition (C) is assumed, so it would seem that
\[
\text{Condition (C) $\Rightarrow$ Condition (A2b).}
\]
This is, however, false and below we give a counterexample. By Corollary~\ref{COR:CONT_CUT_LOCUS}, this counterexample must necessarily be non-compact. It follows that it is not possible to apply the OCLT in the Riemannian case if one only assumes that conditions (A1), (A4)--(A6), and (C) hold. Note, though, that if condition (C) holds, then $\Cut(q_o)$ has measure zero.
    
\begin{cexample}[The flat cylinder continued]
	\label{EX:CYLINDER_CONT}
 	We follow the same notation and definitions as in Example~\ref{EX:CYLINDER_CLC}.
    On the cylinder $C$ we have the canonical quotient distance
    given by
   	\[
   	d(z,w) = \min_{k\in\{-1,0,1\}} |z+2k\pi-w|\quad \mbox{ for }z,w\in C\,.
   	\]
    
    Now, introduce the probability measure $\nu$ on $C$ with density 
    \begin{eqnarray}
    \label{cylinder-density:eq}
    f(z) &=& \frac{\gamma}{4y^5 |z|^2}
    \end{eqnarray} 
    for $z=x+iy$ and $\gamma >0$ appropriately chosen, on 
    \[
    R:=\{z+iy\in T:  x = \pm (1/y-\pi) \mbox{ and }y >1\}
    \]
    with respect to the canonical quotient measure on $R$ restricted to the imaginary part only, denoted by $dy$, i.e. 
    \[
    \gamma \int_1^\infty \left(f\left(\pi - \frac{1}{y}+iy\right) + f\left(\frac{1}{y} - \pi+iy\right)\right)\,dy = 1\,.
    \]
    By construction the Euclidean mean on $\CC$ exists for $\nu$, which is then unique and lies, by symmetry, on the imaginary axis and we denote it by $iy_\nu$. Further, we introduce the unit point mass $\delta = \delta_{iy_\nu}$. In consequence, for every $0\leq \alpha\leq 1$ the combined probability measure  $\mu_\alpha = \alpha\nu + (1-\alpha)\delta$ has the unique Fr\'echet mean $iy_\nu$ in $\CC$.
    
    In order to compute the Fr\'echet means of $\mu_\alpha$ on $C$, we consider the corresponding Fr\'echet functions for $w\in C$,
    \begin{eqnarray*}
    F_\alpha(w) &=& \int_C d(z,w)^2\,d\mu_\alpha(z) = \alpha F_1(w) + (1-\alpha) F_0(w),
	\end{eqnarray*}
	where
    \begin{eqnarray*}
    F_1(w) &=& \int_R d(z,w)^2\,d\nu(p), \\
    F_0(w) &=& d(iy_\nu,w)^2~=|iy_\nu-w|^2\,.
    \end{eqnarray*}
    For the following, fix a Euclidean ball $B_r$ about $iy_\nu$ of radius $0<r<\pi$. Then we have
    \[
    \inf_{w\in C\setminus B_r}F_0(w) = r^2\,.
    \]
    On the other hand, letting  $A_r=\sup_{w\in B_r}F_1(w)\geq 0$ and $0\leq \alpha < r^2/(r^2+ A_r)$, for every $w\in T\setminus B_r$ we have
    \begin{eqnarray*}
    F_\alpha(w) & \geq & \alpha F_1(w) + (1-\alpha)r^2\\
     			& \geq & (1-\alpha) r^2\\
     			& > & \alpha A_r\\ 
     			& \geq & F_\alpha(iy_\nu)\,.
    \end{eqnarray*}
    In consequence, whenever $\alpha < r^2/(r^2+ A_r)$, all Fr\'echet means of $\mu_\alpha$ on $C$ lie within $B_r$, and by symmetry, they have the form $w_\alpha = \pm \varepsilon_\alpha + iv_\alpha$, for suitable $0\leq \varepsilon_\alpha<r$ and  $y_\nu -r < v_\alpha < y_\nu + r$. 
    
    We now show that $w_\alpha = iy_\nu$ for $\alpha>0$ sufficiently small. Then (A1) holds, and, as anticipated, (C) holds but not (A2b). 
    To this end, for given $\varepsilon >0$, decompose $R$ into the sets
    \begin{eqnarray*}
     R_\varepsilon := \{z=x+iy\in R: y < 1/\varepsilon\},\\
     \tilde{R}_\varepsilon := \{z=x+iy\in R: y \geq 1/\varepsilon\}\,,
    \end{eqnarray*}
    such that for $w = \varepsilon + iv$,
       \begin{eqnarray*}
	F_1(w) &=& F^\CC_{\nu_{\varepsilon}}(w) + F_{\tilde \nu_{\varepsilon}}(w) \mbox{ where }\\  F^\CC_{\nu_{\varepsilon}}(w) &=& \int_{R_\varepsilon} |w-z|^2 \,f(z)\,dy\\
	F_{\tilde \nu_{\varepsilon}}(w) &=&\int_{\tilde R_\varepsilon} d(w,z)^2 \,f(z)\,dy\,.
	\end{eqnarray*}
	For $w\in B_r$ we also consider 
	\[
	 F^\CC_{\tilde \nu_{\varepsilon}}(w) ~=~\int_{\tilde R_\varepsilon} |w-z|^2 \,f(z)\,dy 
	 \]
	and infer, recalling that $R$ lies above the horizontal line $y=1$, 
       \begin{eqnarray}\nonumber\label{cylinder1:ineq}
	F^\CC_{\tilde \nu_{\varepsilon}}(w)
	&=& \gamma \int_{\frac{1}{\varepsilon}}^\infty \left(
	 \frac{|\pi-1/y+iy-w|^2}{|\pi-1/y+iy|^2} + \frac{|-\pi+1/y+iy-w|^2}{|-\pi+1/y+iy|^2}
	\right) \,\frac{dy}{4y^5}\\
	&\leq& 2\gamma\,\left(1 + \frac{|w|^2}{(\pi-1)^2} \right)\,\varepsilon^4 ~\leq~ 2\gamma \,\left(1 + \frac{|y_\nu|^2 +r^2}{(\pi-1)^2} \right)\,\varepsilon^4
	\,.
	\end{eqnarray}
    
    Finally, choose
    \[
    0 < \alpha < \min\left\{\frac{r^2}{r^2+A_r},\left(1+2\gamma \left(1+\frac{|y_\nu|^2 +r^2}{(\pi-1)^2}\right)    \right)^{-1}\right\}\,,
    \]
    and suppose that $w_\alpha = \varepsilon_\alpha +iv_\alpha \in B_r$ is a Fr\'echet mean of $\mu_\alpha$ which is not $iy_\nu$. If $\varepsilon_\alpha =0$ and $v_\alpha \neq y_\nu$, then we obtain the contradiction,
    \[
    F_\alpha(w_\alpha) = \int_T |w_\alpha-z|^2\,d\mu_\alpha(z) > \int_T |iy_\nu-z|^2\,d\mu_\alpha(z) = F_\alpha(iy_\nu)\,,
    \]
    because $iy_\nu$ is the unique Fr\'echet mean of $\mu_\alpha$ in $\CC$. 
    Else, if $\varepsilon=\varepsilon_\alpha \neq 0$, using (\ref{cylinder1:ineq}), we obtain another contradiction
       \begin{eqnarray*}
     F_\alpha(w_\alpha) &=& (1-\alpha) F_0(w_\alpha) + \alpha  F_{\nu_{\varepsilon}}(w_\alpha) + \alpha F_{\tilde \nu_{\varepsilon}}(w_\alpha)\\
     &=&(1-\alpha) |iy_\nu-w_\alpha|^2 + \alpha  F^\CC_{\nu_{\varepsilon}}(w_\alpha) + \alpha F_{\tilde \nu_{\varepsilon}}(w_\alpha)\\
     &\geq& (1-\alpha) |iy_\nu-w_\alpha|^2 + \alpha  F^\CC_{\nu_{\varepsilon}}(w_\alpha) \\
     &\geq& (1-\alpha)\varepsilon^2 + \alpha  F^\CC_{1}(w_\alpha) - 2\alpha \gamma \left(1+\frac{|y_\nu|^2 +r^2}{(\pi-1)^2}\right)\varepsilon^4   \\
     &> & F^\CC_\alpha(iy_\nu) + \,(1-\alpha)\,(\varepsilon^2- \varepsilon^4)~>~ F^\CC_\alpha(iy_\nu) ~=~F_\alpha(iy_\nu) \,. 
    \end{eqnarray*}
    \hfill $\square$
    \end{cexample}
       \begin{rem}
   At this point we anticipate that if we had used $y^3$ instead of $y^5$ in (\ref{cylinder-density:eq}) we might have encountered smeariness as discussed in \cite{HH15} and \cite{EltznerHuckemann18}. This would, however, violate condition (A6).
   \end{rem}

\bibliographystyle{amsplain}


\end{document}